\documentclass[11pt,a4paper]{article}
\usepackage{authblk}
\usepackage[utf8]{inputenc}
\usepackage[T1]{fontenc}
\usepackage{graphicx}
\graphicspath{{Figures_54/}}
\usepackage{overpic}
\usepackage{float}
\usepackage{subfigure}
\usepackage{multirow}
\usepackage{amssymb}
\usepackage{amsmath}
\usepackage{amsthm}

\usepackage[margin=1in]{geometry}
\usepackage{amsfonts}

\usepackage[numbers,sort&compress]{natbib}
\usepackage{hyperref}
\usepackage{caption}
\usepackage{mathrsfs}
\usepackage{xcolor}
\allowdisplaybreaks
\newtheorem{thm}{Theorem}[section]
\newtheorem{Lem}[thm]{Lemma}
\newtheorem{prop}[thm]{Proposition}
\newtheorem{cor}[thm]{Corollary}
\theoremstyle{definition}
\newtheorem{deff}[thm]{Definition}
\newtheorem{ex}[thm]{Example}

\title{Stability Regions and Bifurcations for Higher-Order Fractional Difference Equations}
\author{Janardhan Chevala}
\author{Sachin Bhalekar\footnote{Corresponding author: sachinbhalekar@uohyd.ac.in; email: janardhanch1992@gmail.com (J.C.).}}

\affil{School of Mathematics and Statistics, University of Hyderabad, Hyderabad 500046, India}

\date{}
\begin{document}
	
	\maketitle
\textit{This paper is dedicated to the memory of Prof. Ravi P. Agarwal.}
\begin{abstract}
We study stability regions for the higher-order, two-term fractional difference equation $\Delta^{\alpha}x(t) + a\,\Delta^{\beta}x(t + \alpha - \beta - 1) = (b - 1)x(t + \alpha - 2)$, where $0 < \beta \leq 1 < \alpha \leq 2$, $a > 0$, and $b \in \mathbb{C}$. The Z-transform yields a characteristic function whose image of the unit circle determines the stability boundary. Using a winding-number formulation, we give a necessary and sufficient root-count condition for asymptotic stability. Two analytically derived parameter values, $a_1 = 2^{\alpha-\beta}$ and $a_2 = 2^{\alpha-\beta}(4 - \alpha)/(2 - \beta)$, characterize an endpoint collision and a loss of regularity of the boundary curve, respectively. The real-parameter case and a nonlinear higher-order logistic map are treated as consequences of the same stability criterion. We also analyze the one-term family $\Delta^{\alpha}x(t) = (c - 1)x(t + \alpha - N)$ for $N - 1 < \alpha \leq N$. A winding-number bound proves that its stability region is empty for every $N \geq 3$. Numerical experiments illustrate the theoretical results.
\end{abstract}	
\medskip
\noindent\textbf{Keywords:} fractional difference equation; asymptotic stability; stability region; bifurcation; winding number; Z-transform.

\noindent\textbf{2020 Mathematics Subject Classification:} 39A30; 39A28; 26A33.

\section{Introduction} 
Difference equations provide a natural framework for processes whose states are observed or updated at discrete times \cite{kelley2001difference,elaydi2005introduction}. Their qualitative theory includes existence, oscillation, periodicity, and stability questions for both scalar equations and systems \cite{agarwal2000difference,agarwal2013advanced}. A higher-order scalar equation can be rewritten as a finite-dimensional first-order system, but its local behavior is governed by a characteristic polynomial of corresponding degree. Consequently, increasing the order can introduce additional characteristic roots, stability boundaries, and bifurcation mechanisms that are absent from first-order maps \cite{kocic1993global,elaydi2005introduction}.

\par Fractional difference operators extend discrete calculus by assigning nonlocal weights to past states and thereby incorporating memory directly into the recurrence relation \cite{miller1993introduction,atici2009discrete,mozyrska2015z,ferreira2022discrete}. Edelman's $\alpha$-families of maps and fractional-difference Caputo families provide influential frameworks for nonlinear discrete systems with power-law and falling-factorial-law memory \cite{edelman2013fractional,edelman2015fractional}. This memory changes the classical root conditions because the characteristic functions contain noninteger powers and branch points rather than ordinary polynomials. Stability regions for single-order fractional difference equations have been derived by spectral, Z-transform, and complex-variable methods \cite{stanislawski2013stability,abu2013asymptotic,vcermak2015explicit,wei2024explicit}. Related work has treated complex fractional orders, periodically varying maps, and discrete delays \cite{bhalekar2022stability,bhalekar2023fractional,joshi2024stability}. Two-term nabla fractional equations were analyzed through Volterra representations in \cite{cermak2012stability}, while a two-term forward-difference model with both orders below one was considered in \cite{chevala2025stability}.

\par The present problem is motivated by discrete processes in which two qualitatively different memory mechanisms act simultaneously. An operator of order $\alpha \in (1,2]$ represents a higher-order evolution with an inertial or multistep character, whereas an operator of order $\beta \in (0,1]$ introduces an additional, slower hereditary contribution. Replacing these two effects by a single effective fractional order generally suppresses the competition between the corresponding memory kernels. The coefficient $a$ measures the relative strength of the lower-order memory term, and changes in $a$ may alter not only the size but also the topology of the stability region. Consequently, a mixed-order model is a natural minimal framework for determining how multiple memory scales influence stability and bifurcation behavior in discrete systems.

\par The principal gap is that the cited theories do not provide a unified stability analysis for a forward-difference equation containing one order above one and a second order at or below one. Our earlier model \cite{chevala2025stability}, in particular, was restricted to $0 < \beta < \alpha \leq 1$ and emphasized bifurcations generated by a negative lower-order coefficient. The present work addresses the genuinely higher-order regime $0 < \beta \leq 1 < \alpha \leq 2$ with $a > 0$. It treats real and complex spectral parameters in a common framework, establishes a winding-number stability criterion, transfers that criterion to a nonlinear map, and studies the arbitrary-order family $N - 1 < \alpha \leq N$. These features are not available simultaneously in the existing literature.

\par The mathematical contributions of this work are fourfold. First, for the two-term linear equation, we derive the characteristic equation by the Z-transform and characterize asymptotic stability through the winding number of its unit-circle image. Second, we prove the formulas $a_1 = 2^{\alpha-\beta}$ and $a_2 = 2^{\alpha-\beta}(4 - \alpha)/(2 - \beta)$, which correspond to an endpoint collision and a stationary point of the stability boundary. Third, we transfer the linear criterion to a nonlinear higher-order logistic map through linearization and examine the distinct behavior of its trivial and nontrivial equilibria. Fourth, for the one-term family $\Delta^\alpha x(t) = (c - 1)x(t + \alpha - N)$, we derive the boundary for arbitrary $N$ and prove that no stability region exists for $N \geq 3$. These results provide a unified description of the effects of mixed fractional orders, higher-order dynamics, and real or complex parameters on discrete-time stability.

\par This paper consists of two parts. In the first part (Section \ref{1model}), we analyze the stability and bifurcations of a two-term linear fractional difference equation with orders $\alpha \in (1,2]$ and $\beta \in (0,1]$ for real and complex parameter values. We also consider a nonlinear logistic map and investigate the stability of its trivial and nontrivial equilibrium points through numerical experiments. In the second part (Section \ref{2model}), we study the stability of higher-order, one-term linear fractional difference equations with $N - 1 < \alpha \leq N$, where $N \in \mathbb{N}$. Conclusions are presented in Section \ref{con.}.

\section{Preliminaries}
In this section, we recall the definitions and results needed in the subsequent analysis \cite{mozyrska2015z,elaydi2005introduction}. The base point of each fractional operator is set to zero, and the step size is $h = 1$.
\begin{deff} \cite{mozyrska2015z}
 For a function $x:\mathbb{N}_0 \to \mathbb{R}$, the fractional sum of order $\beta > 0$ is given by
	\begin{equation*} 
		(\Delta^{-\beta} x)(t)
		=\frac{1}{\Gamma(\beta)} 
		\sum_{s=0}^{n} \frac{\Gamma(n-s+\beta)}{\Gamma(n-s+1)}\, x(s),
	\end{equation*}
	where $t = \beta + n$, $n \in \mathbb{N}_0$, and $\beta \in \mathbb{R}^{+}$.
\end{deff}
\begin{deff} \cite{mozyrska2015z} 
	Let $\mu > 0$ and $m - 1 < \mu < m$, where $m \in \mathbb{N}$ and $m = \lceil\mu\rceil$.
	The $\mu$th fractional Caputo-like difference is defined as
	\begin{equation*}
		\Delta^\mu x(t) = \Delta^{-(m-\mu)}\bigl(\Delta^m x(t)\bigr),
	\end{equation*}
	where $t \in \mathbb{N}_{m-\mu}$ and
	\begin{equation*}
		\Delta^m x(t) = \sum_{k=0}^m \binom{m}{k}(-1)^{m-k}x(t+k).
	\end{equation*}
\end{deff}
\begin{deff}\cite{elaydi2005introduction,mozyrska2015z} 
	The Z-transform of a sequence $\{y(n)\}_{n=0}^\infty$ is the complex-valued function
	\[
	Y(z) = \mathcal{Z}[y](z) = \sum_{k=0}^\infty y(k)z^{-k},
	\]
	where $z$ is a complex number for which the series converges absolutely.
\end{deff}
\begin{prop}\cite{elaydi2005introduction,mozyrska2015z}
	The convolution $\phi \ast x$ of functions $\phi$ and $x$ defined on $\mathbb{N}_0$ is
	\[
	(\phi \ast x)(n) = \sum_{s=0}^n \phi(n-s)x(s) = \sum_{s=0}^n \phi(s)x(n-s).
	\]
	Its Z-transform satisfies $\mathcal{Z}[\phi \ast x](z) = \mathcal{Z}[\phi](z)\,\mathcal{Z}[x](z)$.
\end{prop}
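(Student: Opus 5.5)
The statement to prove is the convolution proposition: $\mathcal{Z}[\phi\ast x](z)=\mathcal{Z}[\phi](z)\,\mathcal{Z}[x](z)$, together with the symmetry of the two sum representations. The approach is to prove the symmetry by a change of index, and the transform identity by a Cauchy product of absolutely convergent series.

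\emph{Symmetry.} In $\sum_{s=0}^n \phi(n-s)x(s)$ substitute $s' = n-s$. As $s$ runs over $\{0,\dots,n\}$, so does $s'$, and the sum becomes $\sum_{s'=0}^n \phi(s')x(n-s')$. This gives the second expression.

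\emph{Transform identity.} Fix $z$ in the common region where both $\sum_k \phi(k)z^{-k}$ and $\sum_k x(k)z^{-k}$ converge absolutely. We have
\[
\mathcal{Z}[\phi\ast x](z)=\sum_{n=0}^\infty\sum_{s=0}^n \phi(n-s)x(s)z^{-(n-s)}z^{-s}.
\]
View this as a sum over the index set $\{(n,s):0\le s\le n\}$. Apply the reindexing $(n,s)\mapsto (j,s)$ with $j=n-s\ge 0$, which is a bijection onto $\mathbb{N}_0\times\mathbb{N}_0$. The double series then becomes $\sum_{j}\sum_{s}\phi(j)z^{-j}\,x(s)z^{-s}$, which factors as the product of the two transforms.

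\emph{Justifying the rearrangement.} This is the only real obstacle. I would invoke Mertens' theorem, or more simply Tonelli/Fubini for counting measure, applied first to the absolute values:
\[
\sum_{j,s}|\phi(j)||z|^{-j}\,|x(s)||z|^{-s}=\Bigl(\sum_j|\phi(j)||z|^{-j}\Bigr)\Bigl(\sum_s|x(s)||z|^{-s}\Bigr)<\infty.
\]
So the double series converges absolutely. Hence any ordering or grouping of its terms, in particular grouping by diagonals $n=j+s$, yields the same value. This also shows that $\mathcal{Z}[\phi\ast x]$ converges absolutely at that $z$, so it is well defined in the sense of the definition, and the identity holds there.
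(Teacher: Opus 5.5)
Your proof is correct: the substitution $s'=n-s$ gives the symmetry, and your Tonelli/Fubini argument on the common region of absolute convergence is a valid justification for regrouping the double series along the diagonals $n=j+s$. The paper gives no proof of this proposition and only cites it from Elaydi and Mozyrska--Wyrwas, where the same interchange-of-summation (Cauchy product) argument is used, so your route coincides with the standard one.
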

\begin{deff}\cite{rudin1987real}
	Let $\eta:[0,2\pi]\to\mathbb{C}$ be a closed, piecewise continuously differentiable curve and let $\lambda\notin\eta([0,2\pi])$. The winding number of $\eta$ about $\lambda$ is
	\begin{equation*}
		\operatorname{Ind}_{\eta}(\lambda)
		= \frac{1}{2\pi\mathrm{i}}\int_{0}^{2\pi}
		\frac{\eta'(\theta)}{\eta(\theta)-\lambda}\,d\theta.
	\end{equation*}
	It is constant on every connected component of $\mathbb{C}\setminus\eta([0,2\pi])$.
\end{deff}
\begin{prop}[Argument principle]\label{prop:argument-principle}\cite{rudin1987real}
	Let $\Omega\subset\mathbb{C}$ be a bounded domain whose positively oriented boundary $\partial\Omega$ is a finite union of piecewise continuously differentiable simple closed curves. Let $F$ be meromorphic on an open set containing $\overline{\Omega}$, and assume that $F$ has neither zeros nor poles on $\partial\Omega$. If $N_F$ and $P_F$ denote, respectively, the numbers of zeros and poles of $F$ in $\Omega$, counted with multiplicity, then
	\begin{equation}
		\frac{1}{2\pi\mathrm{i}}\int_{\partial\Omega}
		\frac{F'(z)}{F(z)}\,dz=N_F-P_F.
		\label{argument-principle}
	\end{equation}
	Equivalently, the left-hand side is the winding number of the image curve $F(\partial\Omega)$ about the origin. More generally, for any $\lambda\notin F(\partial\Omega)$,
	\begin{equation*}
		\operatorname{Ind}_{F(\partial\Omega)}(\lambda)
		= N_{F-\lambda}-P_{F-\lambda},
	\end{equation*}
	where the zeros and poles are counted in $\Omega$. In particular, if $F$ is analytic in $\Omega$, then $P_F=0$, and the integral in \eqref{argument-principle} counts exactly the zeros of $F$ in $\Omega$.

	For an annulus $\Omega=\{z:r<|z|<R\}$, positive orientation means that the outer circle $|z|=R$ is traversed counterclockwise and the inner circle $|z|=r$ is traversed clockwise. Consequently,
	\begin{equation*}
		N_F-P_F
		=\frac{1}{2\pi\mathrm{i}}\int_{|z|=R}\frac{F'(z)}{F(z)}\,dz
		-\frac{1}{2\pi\mathrm{i}}\int_{|z|=r}\frac{F'(z)}{F(z)}\,dz,
	\end{equation*}
	when both circular integrals on the right are parametrized counterclockwise. This annular form is the one used below to count characteristic zeros outside the unit disk.
\end{prop}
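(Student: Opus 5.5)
The plan is to derive the proposition from the classical argument principle for a positively oriented boundary made of finitely many piecewise smooth closed curves, as in \cite{rudin1987real}. The generalized winding statement and the annular form then follow with little extra work.

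\textbf{Main formula.} I would start from the local structure of $F$ at a zero or pole $z_0$ of order $m$ (with $m>0$ for a zero and $m<0$ for a pole). Write $F(z)=(z-z_0)^m g(z)$ with $g$ analytic and nonvanishing near $z_0$. Then
\[
\frac{F'(z)}{F(z)}=\frac{m}{z-z_0}+\frac{g'(z)}{g(z)},
\]
so $F'/F$ is meromorphic on a neighborhood of $\overline{\Omega}$ with only simple poles, and its residue at $z_0$ is $m$.
\begin{enumerate}
\item Only finitely many zeros and poles lie in $\overline{\Omega}$, by compactness and because zeros and poles are isolated. I assume $F\not\equiv 0$ on each component, which is implicit in the hypothesis that $F$ has no zeros on $\partial\Omega$.
\item None of these points lies on $\partial\Omega$, by hypothesis.
\item The residue theorem for the bounded domain $\Omega$ with positively oriented boundary then gives \eqref{argument-principle}.
\end{enumerate}

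\textbf{Winding-number form.} Substituting $w=F(z)$ turns the integral into $\frac{1}{2\pi\mathrm{i}}\int_{F(\partial\Omega)}\frac{dw}{w}$, which is the winding number of the image curve about $0$, summed over the boundary components. For general $\lambda\notin F(\partial\Omega)$, apply the same result to $F-\lambda$. This function is meromorphic with the same poles as $F$ and has no zeros on $\partial\Omega$. Since $(F-\lambda)'=F'$, its integral is $\frac{1}{2\pi\mathrm{i}}\int_{F(\partial\Omega)}\frac{dw}{w-\lambda}=\operatorname{Ind}_{F(\partial\Omega)}(\lambda)$. If $F$ is analytic in $\Omega$ there are no poles, so the integral counts zeros exactly.

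\textbf{Annulus.} Positive orientation means $\Omega$ lies to the left of each boundary curve. On the outer circle this is counterclockwise traversal, and on the inner circle it is clockwise traversal. Reversing orientation negates a contour integral, so rewriting the inner-circle integral with counterclockwise parametrization produces the minus sign, giving the stated difference formula.

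\textbf{Main obstacle.} The one delicate point is justifying the residue theorem, or equivalently the homology version of Cauchy's theorem, for a multiply connected domain bounded by several piecewise smooth curves. I would invoke the general homology form from \cite{rudin1987real}. Its key hypothesis is that the boundary cycle $\Gamma=\partial\Omega$ satisfies $\operatorname{Ind}_\Gamma(z)=1$ for $z\in\Omega$ and $\operatorname{Ind}_\Gamma(z)=0$ for $z$ outside $\overline{\Omega}$ within the open set of meromorphy. That property is what "positively oriented boundary" means. For the annulus it can be checked directly:
\begin{itemize}
\item for $r<|z|<R$, the outer circle contributes $1$ and the inner circle contributes $0$;
\item for $|z|<r$, both circles contribute $1$ and they cancel;
\item for $|z|>R$, both contribute $0$.
\end{itemize}
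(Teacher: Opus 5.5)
Your proposal is correct and follows the standard argument behind the result, which the paper does not prove but simply cites from Rudin: apply the homology residue theorem to $F'/F$, whose residues are the orders $m$; then substitute $w=F(z)$ and reverse the inner circle's orientation to get the annular form. One small precision: the residue theorem's hypothesis is $\operatorname{Ind}_{\partial\Omega}(z)=0$ for $z$ \emph{outside} the open set of meromorphy (not within it). Both this and the vanishing contributions of poles of $F'/F$ outside $\overline{\Omega}$ follow from $\operatorname{Ind}_{\partial\Omega}=0$ on all of $\mathbb{C}\setminus\overline{\Omega}$, which your annulus computation does verify.
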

\section{The model} \label{1model}
Consider the initial-value problem (IVP) for $0 < \beta \leq 1 < \alpha \leq 2$:
\begin{equation} 
\Delta^{\alpha}x(t) + a\,\Delta^{\beta}x(t + \alpha - \beta - 1)
= f\bigl(x(t + \alpha - 2)\bigr) - x(t + \alpha - 2), \label{1}
\end{equation}
\begin{equation}
   x(0) = x_0, \qquad x(1) = x_1, \label{2}
\end{equation}
where $f \in C^1$, $t \in \mathbb{N}_{2-\alpha}$, and $a > 0$.
At the integer endpoints $\alpha=2$ or $\beta=1$, the corresponding operator is understood as the ordinary forward difference. For $\nu>0$, set
\begin{equation*}
\phi_\nu(m)=\frac{\Gamma(m+\nu)}{\Gamma(\nu)\Gamma(m+1)},
\qquad m\in\mathbb{N}_0,
\end{equation*}
and use the limiting convention $\phi_0(0)=1$ and $\phi_0(m)=0$ for $m\geq1$. The definitions of the fractional sum and the Caputo-like fractional difference give the exact recurrence
\begin{align}
x(n)={}&2x(n-1)-2x(n-2)+f\bigl(x(n-2)\bigr) \nonumber\\
&-\sum_{s=0}^{n-3}\phi_{2-\alpha}(n-2-s)\Delta^2x(s) \nonumber\\
&-a\sum_{s=0}^{n-2}\phi_{1-\beta}(n-2-s)\Delta x(s),
\qquad n\geq2, \label{seqrep}
\end{align}
where the first sum is empty when $n=2$. In particular,
\begin{align}
x(2)={}&(2-a)x(1)+(a-2)x(0)+f\bigl(x(0)\bigr), \\
x(3)={}&(\alpha-a)x(2)+(2-2\alpha+a\beta)x(1) \nonumber\\
&+(\alpha-2+a-a\beta)x(0)+f\bigl(x(1)\bigr).
\end{align}
Equation \eqref{seqrep} is the recurrence used for the numerical simulations in this revised version.
\par We now consider the linear function $f(x) = bx$, where $b \in \mathbb{C}$, and derive the corresponding characteristic equation.
 \subsection{Stability analysis}
 Applying the Z-transform to both sides of \eqref{1} with $f(x) = bx$ gives
 \begin{align*}
 &\left(\frac{z}{z - 1}\right)^{2-\alpha}
 \Bigl[(z^2 - 2z + 1)X(z) - (z^2 - 2z)x(0) - zx(1)\Bigr] \\
 &\quad + a\left(\frac{z}{z - 1}\right)^{1-\beta}
 \Bigl[(z - 1)X(z) - zx(0)\Bigr] = (b - 1)X(z),
 \end{align*}
 where $X$ is the Z-transform of $x$. Rearranging yields
 \begin{align}
 &\left[z^2\left(\frac{z}{z - 1}\right)^{-\alpha}
 + az\left(\frac{z}{z - 1}\right)^{-\beta} - b + 1\right]X(z) \nonumber\\
 &\quad = \left[z(z - 2)\left(\frac{z}{z - 1}\right)^{2-\alpha}
 + az\left(\frac{z}{z - 1}\right)^{1-\beta}\right]x(0)
 + z\left(\frac{z}{z - 1}\right)^{2-\alpha}x(1). \label{ztran}
 \end{align}
 
Setting the coefficient of $X(z)$ in \eqref{ztran} equal to zero yields the characteristic equation associated with \eqref{1}:
 \begin{equation}
     z^2(1 - z^{-1})^\alpha + az(1 - z^{-1})^\beta + 1 = b. \label{chareq}
 \end{equation}
 Stability requires $|z| < 1$. The boundary of the stability region is therefore obtained by setting $z = e^{\mathrm{i}\theta}$ in \eqref{chareq}, which gives
 \begin{equation}
     e^{2\mathrm{i}\theta}(1 - e^{-\mathrm{i}\theta})^\alpha
     + ae^{\mathrm{i}\theta}(1 - e^{-\mathrm{i}\theta})^\beta + 1 = b, \label{bval}
 \end{equation}
 where $\theta \in [0,2\pi]$. Simplifying \eqref{bval}, we obtain
 \begin{align}
 b ={}& 2^\alpha\left(\sin\frac{\theta}{2}\right)^\alpha
 e^{\mathrm{i}\left[\frac{\alpha\pi}{2}+\theta\left(2-\frac{\alpha}{2}\right)\right]} \nonumber\\
 &+ a\,2^\beta\left(\sin\frac{\theta}{2}\right)^\beta
 e^{\mathrm{i}\left[\frac{\beta\pi}{2}+\theta\left(1-\frac{\beta}{2}\right)\right]} + 1. \label{simbval}
\end{align}
This expression describes the boundary of the stability region in the complex plane. Separating the real and imaginary parts of \eqref{simbval} gives the parametric representation
 \begin{equation}
     \gamma(\theta) = \bigl(\gamma_1(\theta),\gamma_2(\theta)\bigr), \label{boundarycurve}
 \end{equation}
 where 
\begin{align}
\gamma_1(\theta) ={}& 2^\alpha\left(\sin\frac{\theta}{2}\right)^\alpha
\cos\left(\frac{\alpha\pi}{2}+\theta\left(2-\frac{\alpha}{2}\right)\right) \nonumber\\
&+ a\,2^\beta\left(\sin\frac{\theta}{2}\right)^\beta
\cos\left(\frac{\beta\pi}{2}+\theta\left(1-\frac{\beta}{2}\right)\right) + 1, \label{realpart}
\end{align}
  and
\begin{align}
\gamma_2(\theta) ={}& 2^\alpha\left(\sin\frac{\theta}{2}\right)^\alpha
\sin\left(\frac{\alpha\pi}{2}+\theta\left(2-\frac{\alpha}{2}\right)\right) \nonumber\\
&+ a\,2^\beta\left(\sin\frac{\theta}{2}\right)^\beta
\sin\left(\frac{\beta\pi}{2}+\theta\left(1-\frac{\beta}{2}\right)\right). \label{imgpart}
\end{align}
Here, $\gamma_1(\theta)$ and $\gamma_2(\theta)$ are the real and imaginary parts, respectively, of $b$ in \eqref{simbval}.

\begin{thm}[Characteristic-root and winding-number criterion]\label{thm:mixed-stability}
Let $0 < \beta \leq 1 < \alpha \leq 2$ and $a > 0$, and define
\begin{equation*}
D_b(z)=z^2(1-z^{-1})^\alpha+az(1-z^{-1})^\beta+1-b,
\end{equation*}
where the principal branches of the fractional powers are used. If $b\notin\gamma([0,2\pi])$, then the zero solution of the linear equation associated with \eqref{1} is asymptotically stable if and only if
\begin{equation}\label{winding-stability}
\operatorname{Ind}_{\gamma}(b)=2.
\end{equation}
Equivalently, the stability region is
\begin{equation*}
\mathcal{S}_{\alpha,\beta,a}
=\left\{b\in\mathbb{C}\setminus\gamma([0,2\pi]):
\operatorname{Ind}_{\gamma}(b)=2\right\}.
\end{equation*}
Its boundary is the curve \eqref{simbval}, or equivalently \eqref{realpart}--\eqref{imgpart}.
\end{thm}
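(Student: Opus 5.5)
The plan is to reduce asymptotic stability to a root-location statement for $D_b$, and then count roots with the annular form of Proposition~\ref{prop:argument-principle}. First I would rewrite \eqref{ztran} as $D_b(z)X(z)=P(z)$, where $P(z)$ collects the initial-data terms. Both $D_b$ and the coefficients in $P$ are analytic in $|z|>1$, because $(1-z^{-1})^\nu$ with the principal branch is analytic there. I then use the standard Z-transform stability criterion for Caputo-like fractional difference equations (as in \cite{mozyrska2015z} and the single-term analyses cited in the introduction): the zero solution is asymptotically stable if and only if $D_b(z)\neq0$ for all $|z|\ge1$. Sufficiency follows from the inverse Z-transform and decay of the power-law kernels. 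Necessity holds because a zero with $|z_0|>1$ yields a growing mode, and a zero on $|z|=1$ is excluded by the hypothesis $b\notin\gamma([0,2\pi])$. So it remains to show that ``no zeros of $D_b$ in $|z|>1$'' is equivalent to $\operatorname{Ind}_\gamma(b)=2$.

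For the counting, substitute $w=1/z$. Then $G(w)=w^2 D_b(1/w)=(1-w)^\alpha+aw(1-w)^\beta+(1-b)w^2$ is analytic in $|w|<1$ and continuous up to $|w|=1$. The only branch point is $w=1$, where $G(1)=1-b+a\cdot0$ plus vanishing terms. The zeros of $D_b$ in $|z|>1$ correspond exactly to the zeros of $G$ in $|w|<1$, and $G(0)=1\neq0$, so no spurious zero is introduced at $w=0$.

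Next I apply the argument principle on $|w|=r<1$ and let $r\to1^-$. This needs care near $w=1$, where $(1-w)^\alpha$ is only Hölder continuous. I would handle it by indenting the contour with a small arc around $w=1$ and using the fact that $G(w)\to 1-b\neq0$ there (since $b\neq\gamma(0)=1$), so the arc contributes nothing to the argument change in the limit. This gives
\[
N_G=\operatorname{Ind}_{G(\partial\mathbb{D})}(0).
\]
On $|w|=1$ with $w=e^{-\mathrm{i}\theta}$, one has $G=e^{-2\mathrm{i}\theta}(D_b(e^{\mathrm{i}\theta}))=e^{-2\mathrm{i}\theta}(\gamma(\theta)-b)$. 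As $w$ traverses the circle counterclockwise, $\theta$ runs clockwise. The winding number of $G$ about $0$ is therefore $-(\,-2+\operatorname{Ind}_\gamma(b)\,)=2-\operatorname{Ind}_\gamma(b)$, where $\operatorname{Ind}_\gamma$ is computed with $\theta$ increasing on $[0,2\pi]$. Since $N_G\ge0$, the condition $N_G=0$ is equivalent to $\operatorname{Ind}_\gamma(b)=2$, which is \eqref{winding-stability}.

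The description of $\mathcal S_{\alpha,\beta,a}$ follows because the index is locally constant off $\gamma$, so the boundary of the set where it equals $2$ lies in $\gamma([0,2\pi])$, which is \eqref{simbval}.

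The main obstacle is the boundary behaviour at the branch point $w=1$ (equivalently $z=1$, $\theta=0$). There the curve $\gamma$ is only piecewise smooth, with $\gamma(0)=\gamma(2\pi)=1$. I would first check that $\gamma'$ is integrable near $\theta=0$: the terms behave like $\theta^{\beta-1}$, which is integrable for $\beta>0$, so the winding integral is well defined. I would then verify the indentation limit by a direct estimate of $\arg G$ on the small arc. A secondary point is making the Z-transform stability equivalence rigorous for $|z_0|=1$. I would sidestep this by noting that such $b$ lie on $\gamma$ and are excluded by hypothesis.
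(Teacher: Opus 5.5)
Your proposal is correct and is essentially the paper's proof. Both apply the argument principle to the exterior of the unit disk, identify the unit-circle image with $\gamma-b$, and use the degree-two behaviour at infinity to supply the constant, giving $n_{\mathrm{out}}(b)=2-\operatorname{Ind}_{\gamma}(b)$. The differences are cosmetic: your inversion $w=1/z$ replaces the paper's large circle $|z|=R$ and its homotopy to $z^2$ by the factor $w^2$ in $G(w)=w^2D_b(1/w)$ with $G(0)=1$, and you treat the branch point $z=1$ (where $D_b(1)=1-b\neq0$) more explicitly than the paper does.
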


\begin{figure}[H]
\centering
\includegraphics[width=0.95\textwidth]{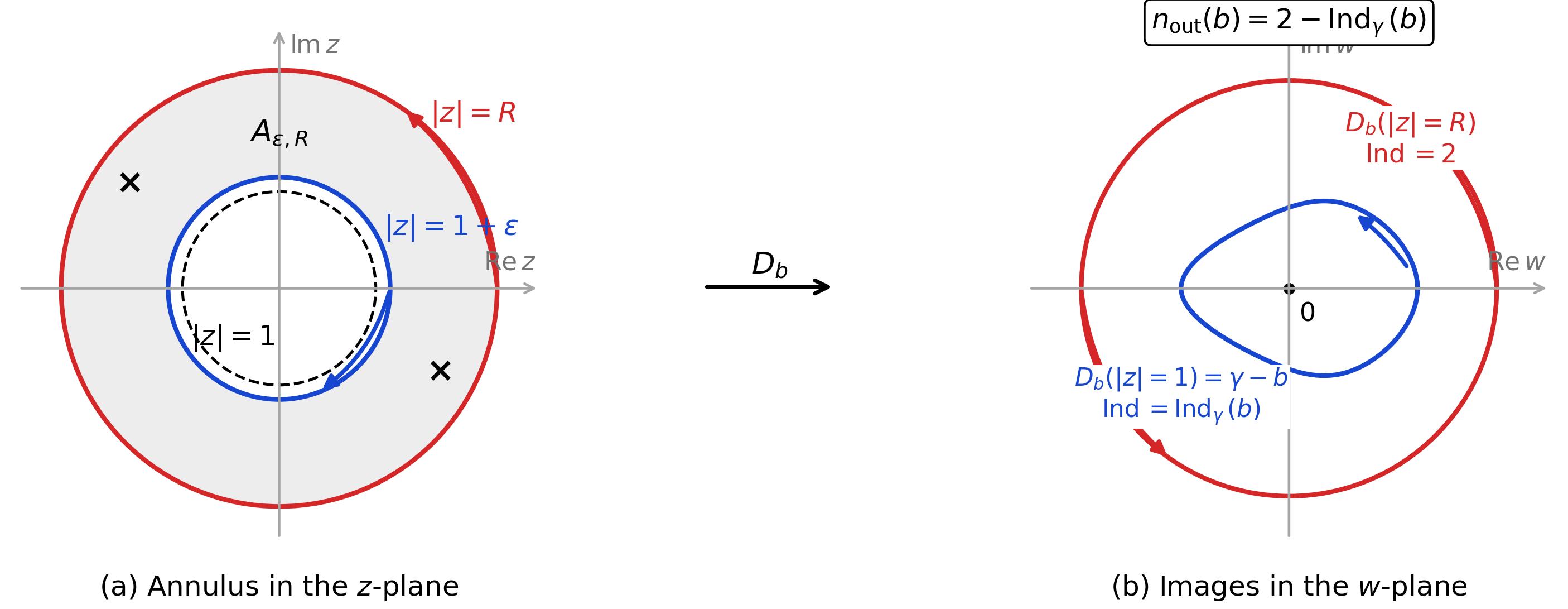}
\caption{Schematic illustration of the argument-principle setup in Theorem \ref{thm:mixed-stability}. The annulus $A_{\varepsilon,R}=\{z:1+\varepsilon<|z|<R\}$ counts characteristic zeros outside the unit disk; the crosses indicate possible zeros counted by $n_{\mathrm{out}}(b)$. Its outer boundary is counterclockwise, whereas its inner boundary has the clockwise orientation induced by the annulus. Under $D_b$, the outer-boundary image has winding number two for large $R$, while the inner-boundary image tends to $\gamma-b$ as $\varepsilon\downarrow0$. Hence $n_{\mathrm{out}}(b)=2-\operatorname{Ind}_{\gamma}(b)$, and stability corresponds to $\operatorname{Ind}_{\gamma}(b)=2$. The image curves are schematic and not drawn to scale.}
\label{fig:mixed-winding-setup}
\end{figure}

\begin{proof}
The contour and its boundary images are summarized in Figure \ref{fig:mixed-winding-setup}.
Equation \eqref{ztran} has the form
\begin{equation*}
D_b(z)X(z)=P(z),
\end{equation*}
where $P$ depends only on the initial conditions. By the characteristic-root criterion for delta fractional difference equations, the zero solution is asymptotically stable precisely when every characteristic zero lies in the open unit disk. A zero with $|z|>1$ produces a growing mode, while a zero on $|z|=1$ prevents asymptotic decay \cite{stanislawski2013stability,abu2013asymptotic,vcermak2015explicit,wei2024explicit}.

The role of the region $|z|>1$ in the proof is only to count the unstable characteristic zeros; it is not an assumption that such a zero exists. This exterior region is used because the principal fractional powers are analytic there. Indeed, if $|z|>1$, then
\begin{equation*}
\operatorname{Re}(1-z^{-1})
\geq 1-|z|^{-1}>0.
\end{equation*}
Thus, $1-z^{-1}$ does not meet the branch cut $(-\infty,0]$, and $D_b$ is analytic for $|z|>1$.

Let $n_{\mathrm{out}}(b)$ denote the number of zeros of $D_b$, counted with multiplicity, that satisfy $|z|>1$. Choose $R>1$ sufficiently large that all such zeros lie in $|z|<R$. For $\varepsilon>0$, apply the argument principle to the annulus
\begin{equation*}
1+\varepsilon<|z|<R.
\end{equation*}
Both circular integrals below are written counterclockwise; therefore, the inner-circle contribution is subtracted. Letting $\varepsilon\downarrow0$ gives
\begin{equation}\label{nout-mixed}
n_{\mathrm{out}}(b)
=\frac{1}{2\pi\mathrm{i}}\int_{|z|=R}\frac{D_b'(z)}{D_b(z)}\,dz
-\frac{1}{2\pi\mathrm{i}}\int_{|z|=1}\frac{D_b'(z)}{D_b(z)}\,dz.
\end{equation}
The limit at the inner circle is valid because $b\notin\gamma([0,2\pi])$; in particular, $b\neq1$ and $D_b$ has no zero on $|z|=1$.

We evaluate the two terms in \eqref{nout-mixed}. First, on $z=e^{\mathrm{i}\theta}$,
\begin{equation*}
D_b(e^{\mathrm{i}\theta})=\gamma(\theta)-b.
\end{equation*}
Using $dz=\mathrm{i}e^{\mathrm{i}\theta}d\theta$ and differentiating the preceding identity with respect to $\theta$, we obtain
\begin{align*}
\frac{1}{2\pi\mathrm{i}}\int_{|z|=1}\frac{D_b'(z)}{D_b(z)}\,dz
&=\frac{1}{2\pi\mathrm{i}}\int_0^{2\pi}
\frac{\gamma'(\theta)}{\gamma(\theta)-b}\,d\theta\\
&=\operatorname{Ind}_{\gamma}(b).
\end{align*}
Second, as $|z|\to\infty$,
\begin{equation*}
D_b(z)=z^2+O(z),
\qquad
\frac{D_b(z)}{z^2}\longrightarrow1
\end{equation*}
uniformly on $|z|=R$. To justify the winding number of the outer image
explicitly, write $z=Re^{\mathrm{i}\theta}$. Then
\begin{equation*}
D_b\bigl(Re^{\mathrm{i}\theta}\bigr)
=R^2e^{2\mathrm{i}\theta}\bigl(1+O(R^{-1})\bigr),
\qquad 0\leq\theta\leq2\pi,
\end{equation*}
where the error estimate is uniform in $\theta$. The leading curve
$R^2e^{2\mathrm{i}\theta}$ makes two counterclockwise turns around the
origin because its argument increases from $0$ to $4\pi$. Choose $R$ so
large that
\begin{equation*}
\sup_{|z|=R}\left|\frac{D_b(z)}{z^2}-1\right|<1.
\end{equation*}
Then the curves
\begin{equation*}
H_s(z)=z^2\left[1+s\left(\frac{D_b(z)}{z^2}-1\right)\right],
\qquad 0\leq s\leq1,
\end{equation*}
do not pass through the origin. Thus, $D_b(|z|=R)$ is homotopic in
$\mathbb{C}\setminus\{0\}$ to the twice-traversed circle $z^2(|z|=R)$,
so both curves have winding number two. By the definition of winding
number, this gives
\begin{equation*}
\frac{1}{2\pi\mathrm{i}}\int_{|z|=R}\frac{D_b'(z)}{D_b(z)}\,dz=2.
\end{equation*}
Substitution into \eqref{nout-mixed} yields the root-count identity
\begin{equation}\label{root-count-mixed}
n_{\mathrm{out}}(b)=2-\operatorname{Ind}_{\gamma}(b).
\end{equation}

We can now state the equivalence explicitly. Since $b\notin\gamma([0,2\pi])$, no characteristic zero lies on $|z|=1$. Therefore,
\begin{align*}
\text{asymptotic stability}
&\iff \text{there is no characteristic zero with }|z|>1\\
&\iff n_{\mathrm{out}}(b)=0\\
&\overset{\eqref{root-count-mixed}}{\iff}
2-\operatorname{Ind}_{\gamma}(b)=0\\
&\iff \operatorname{Ind}_{\gamma}(b)=2.
\end{align*}
Conversely, if the system is unstable and $b$ is not on the boundary, then at least one characteristic zero satisfies $|z|>1$. Hence $n_{\mathrm{out}}(b)\geq1$, and \eqref{root-count-mixed} shows that $\operatorname{Ind}_{\gamma}(b)\neq2$.

Finally, substituting $z=e^{\mathrm{i}\theta}$ into $D_b(z)=0$ gives \eqref{simbval}; separating its real and imaginary parts gives \eqref{realpart}--\eqref{imgpart}.
\end{proof}
\subsection[Bifurcations for complex parameter b]{Bifurcations for $0 < \beta \leq 1 < \alpha \leq 2$, $a > 0$, and $b \in \mathbb{C}$} \label{imaginary case}
The two distinguished parameter values observed in the boundary geometry can be derived analytically.

\begin{thm}[Boundary bifurcation values]\label{thm:bifurcation-values}
Let $0 < \beta \leq 1 < \alpha \leq 2$ and let $\gamma$ be given by \eqref{simbval}. Define
\begin{equation}\label{a1a2}
a_1=2^{\alpha-\beta},
\qquad
a_2=2^{\alpha-\beta}\frac{4-\alpha}{2-\beta}.
\end{equation}
Then:
\begin{enumerate}
\item $\gamma(0)=\gamma(\pi)$ if and only if $a=a_1$;
\item $\gamma'(\pi)=0$ if and only if $a=a_2$;
\item $0<a_1<a_2$.
\end{enumerate}
Thus, $a_1$ is the unique value for which $\gamma(0)=\gamma(\pi)$, and $a_2$ is the unique value for which the boundary is stationary at $\theta=\pi$.
\end{thm}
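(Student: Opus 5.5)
The plan is a direct computation that uses only the complex form \eqref{simbval} of $\gamma$. Write $s(\theta)=\sin(\theta/2)$, $\varphi_1(\theta)=\frac{\alpha\pi}{2}+\theta\left(2-\frac{\alpha}{2}\right)$ and $\varphi_2(\theta)=\frac{\beta\pi}{2}+\theta\left(1-\frac{\beta}{2}\right)$. Then
\begin{equation*}
\gamma(\theta)=2^\alpha s^\alpha e^{\mathrm{i}\varphi_1}+a\,2^\beta s^\beta e^{\mathrm{i}\varphi_2}+1 .
\end{equation*}
All three claims reduce to evaluating this expression and its $\theta$-derivative at $\theta=0$ and $\theta=\pi$, followed by one linear equation in $a$.

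\emph{Part 1.} At $\theta=0$ we have $s=0$, and both exponents $\alpha,\beta$ are positive, so $\gamma(0)=1$. At $\theta=\pi$ we have $s=1$. The two phases are $\varphi_1(\pi)=\frac{\alpha\pi}{2}+2\pi-\frac{\alpha\pi}{2}=2\pi$ and $\varphi_2(\pi)=\frac{\beta\pi}{2}+\pi-\frac{\beta\pi}{2}=\pi$. Hence
\begin{equation*}
\gamma(\pi)=1+2^\alpha-a\,2^\beta .
\end{equation*}
This equals $\gamma(0)=1$ if and only if $a=2^{\alpha-\beta}=a_1$. Both directions are immediate.

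\emph{Part 2.} Near $\theta=\pi$ we have $s>0$, so every term is smooth there and we may differentiate with the product rule. The derivative of $s^\nu$ is $\frac{\nu}{2}s^{\nu-1}\cos(\theta/2)$, which vanishes at $\theta=\pi$. Only the derivatives of the phases survive, giving
\begin{equation*}
\gamma'(\pi)=\mathrm{i}\,2^\alpha\Bigl(2-\tfrac{\alpha}{2}\Bigr)e^{2\pi\mathrm{i}}+\mathrm{i}\,a\,2^\beta\Bigl(1-\tfrac{\beta}{2}\Bigr)e^{\mathrm{i}\pi}
=\frac{\mathrm{i}}{2}\Bigl[2^\alpha(4-\alpha)-a\,2^\beta(2-\beta)\Bigr].
\end{equation*}
In terms of \eqref{realpart}--\eqref{imgpart}, this says $\gamma_1'(\pi)=0$ for every $a$, so stationarity is decided by $\gamma_2'(\pi)$ alone. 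The factor $2-\beta$ is positive, so $\gamma'(\pi)=0$ if and only if $a=2^{\alpha-\beta}(4-\alpha)/(2-\beta)=a_2$.

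\emph{Part 3.} Clearly $a_1>0$. The inequality $a_1<a_2$ is equivalent to $4-\alpha>2-\beta$, that is, $\alpha<2+\beta$. This holds because $\alpha\le2$ and $\beta>0$.

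The uniqueness statements follow because each condition is linear in $a$ with a nonzero coefficient. I expect no real obstacle. The only points needing care are the phase bookkeeping at $\theta=\pi$, which must confirm $\varphi_1(\pi)=2\pi$ and $\varphi_2(\pi)=\pi$ exactly, and justifying differentiability at $\pi$. The latter is safe because the fractional powers of $s$ are only singular at $\theta=0$ and $\theta=2\pi$.
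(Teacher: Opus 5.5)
Your proposal is correct and follows essentially the same route as the paper. Both evaluate \eqref{simbval} at $\theta=0$ and $\theta=\pi$, use $\cos(\pi/2)=0$ to kill the radial derivatives at $\theta=\pi$, reduce each condition to a linear equation in $a$ with nonzero coefficient, and get $a_1<a_2$ from $4-\alpha>2-\beta$. Your extra remarks, that $\gamma$ is differentiable at $\theta=\pi$ and that $\gamma_1'(\pi)=0$ for every $a$, are correct refinements of the same computation.
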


\begin{proof}
Since $\sin(0)=0$, equation \eqref{simbval} gives $\gamma(0)=1$. At $\theta=\pi$, the two phase factors satisfy
\begin{equation*}
e^{\mathrm{i}[\alpha\pi/2+\pi(2-\alpha/2)]}=1,
\qquad
e^{\mathrm{i}[\beta\pi/2+\pi(1-\beta/2)]}=-1.
\end{equation*}
Therefore,
\begin{equation*}
\gamma(\pi)=1+2^\alpha-a\,2^\beta.
\end{equation*}
The equality $\gamma(0)=\gamma(\pi)$ is equivalent to $2^\alpha=a\,2^\beta$, and hence to $a=a_1$.

To obtain the second value, differentiate \eqref{simbval}. The derivatives of the radial factors vanish at $\theta=\pi$ because $\cos(\pi/2)=0$. Consequently,
\begin{align*}
\gamma'(\pi)
&=\mathrm{i}\,2^\alpha\left(2-\frac{\alpha}{2}\right)
-\mathrm{i}\,a\,2^\beta\left(1-\frac{\beta}{2}\right)\\
&=\mathrm{i}\left[2^{\alpha-1}(4-\alpha)
-a\,2^{\beta-1}(2-\beta)\right].
\end{align*}
Thus, $\gamma'(\pi)=0$ if and only if
\begin{equation*}
a=2^{\alpha-\beta}\frac{4-\alpha}{2-\beta}=a_2.
\end{equation*}
Finally,
\begin{equation*}
\frac{a_2}{a_1}=\frac{4-\alpha}{2-\beta}>1,
\end{equation*}
because $2-\alpha+\beta>0$. Hence $0<a_1<a_2$.
\end{proof}

For numerical classification, let $\theta_j=2\pi j/M$ and define the discrete winding-number approximation
\begin{equation}\label{discrete-winding}
w_a(b)=\frac{1}{2\pi}\sum_{j=0}^{M-1}
\operatorname{Arg}\left(
\frac{\gamma(\theta_{j+1})-b}{\gamma(\theta_j)-b}
\right),
\qquad \theta_M=2\pi,
\end{equation}
where $\operatorname{Arg}\in(-\pi,\pi]$. This is the standard signed-angle summation applied to the polygonal approximation of the curve \cite{hormann2001point}. For sufficiently large $M$, the value of \eqref{discrete-winding} is an integer and equals $\operatorname{Ind}_{\gamma}(b)$. In the computations below, we used $M=10^5$ and verified that doubling $M$ did not change the resulting integer. Thus, Theorem \ref{thm:mixed-stability} classifies a point as stable precisely when $w_a(b)=2$.

For the parameter values displayed in Fig. \ref{case1all}, the index-two component undergoes the endpoint collision at $a_1$ and collapses at the stationary-point value $a_2$. The bounded unstable components have winding number one, whereas the unbounded exterior has winding number zero.

\begin{figure}[H]
	\centering
	\subfigure[$a<a_1$]{
		\includegraphics[height=3.5in,width=3.5in,keepaspectratio]{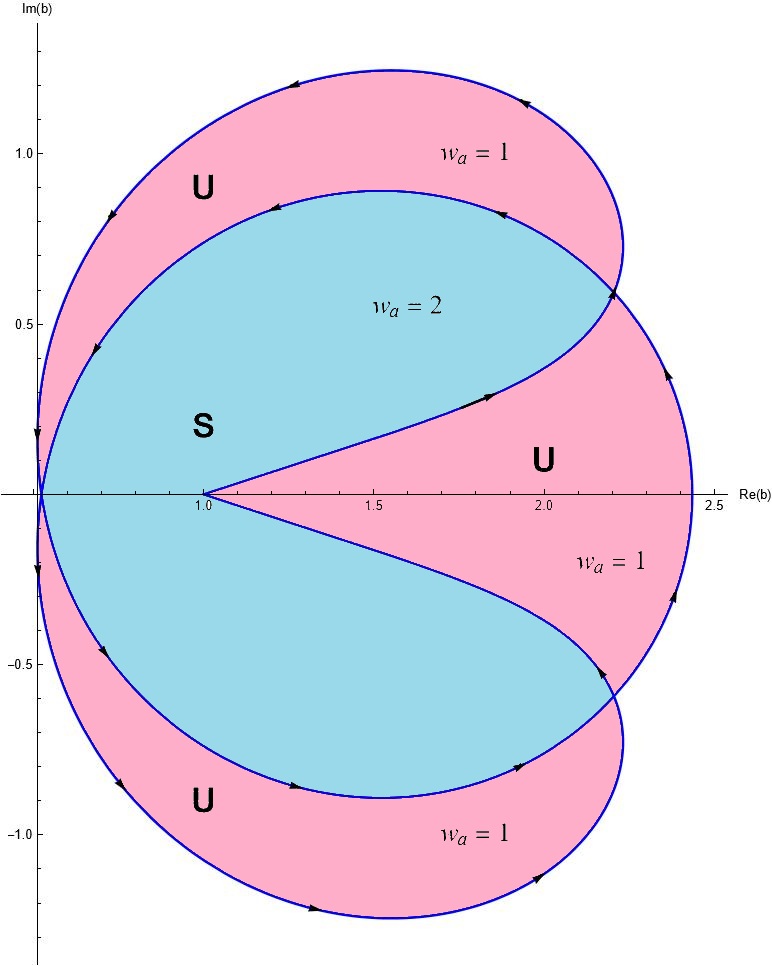}
		\label{case1a}
	} \hspace{0.3cm}
	\subfigure[$a_1$]{
\includegraphics[height=3.5in,width=3.5in,keepaspectratio]{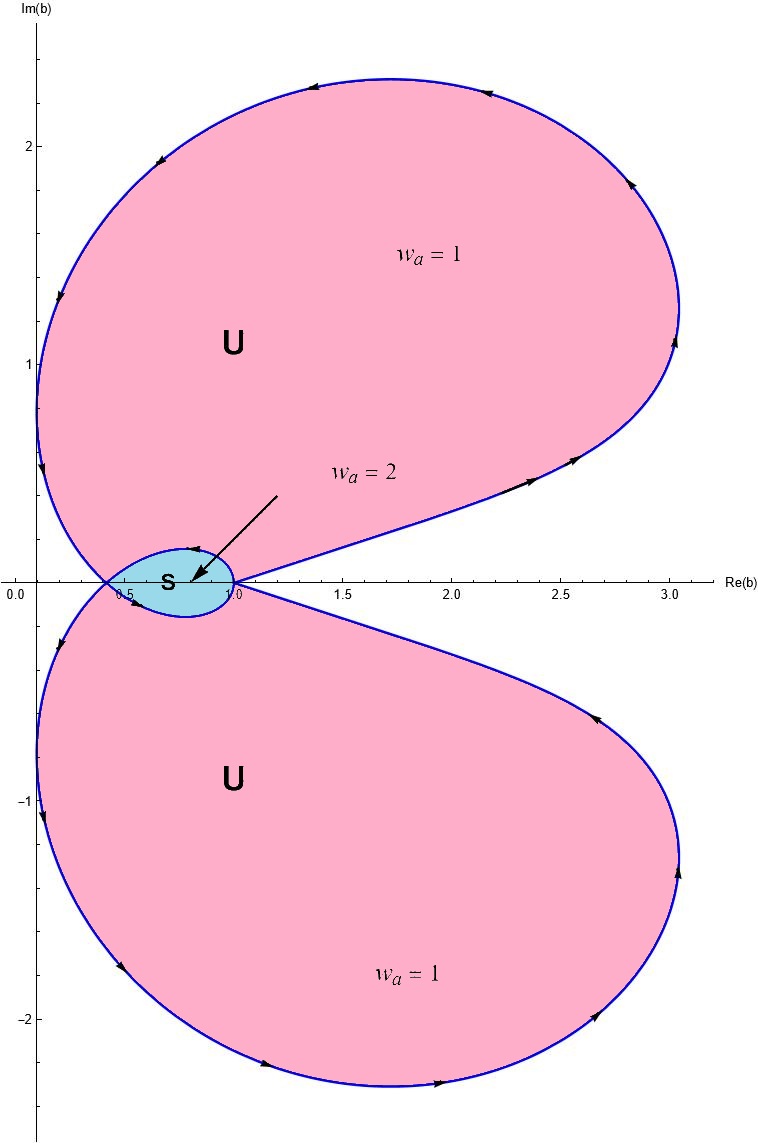}
		\label{case1b}
	} \hspace{0.3cm}
	\subfigure[$a_1<a<a_2$]{
		\includegraphics[height=3.5in,width=3.5in,keepaspectratio]{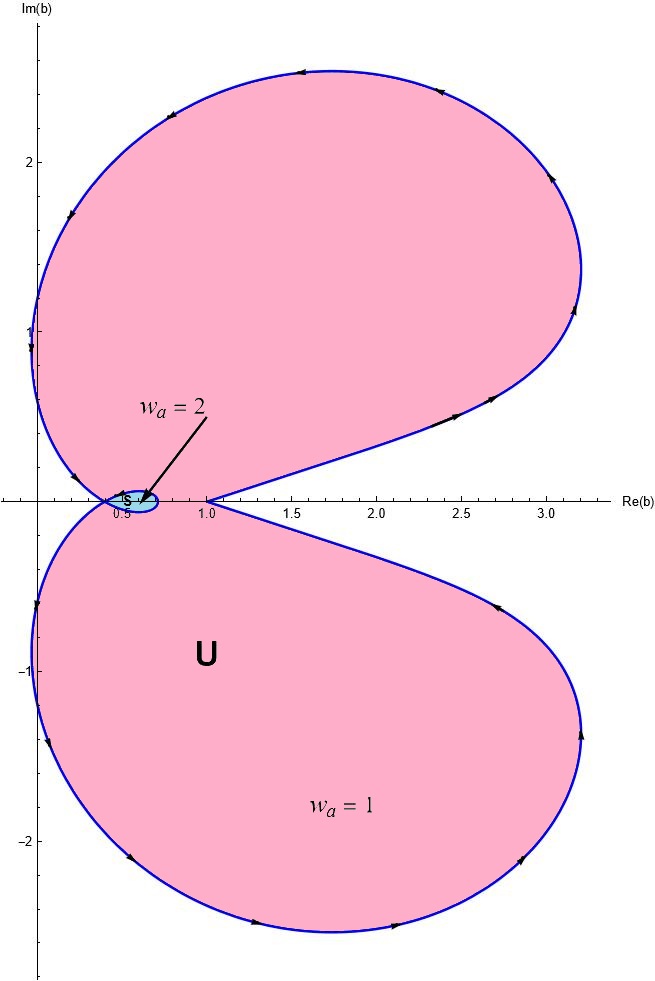}
		\label{case1c}
	} \hspace{0.3cm}
	\subfigure[$a_2$]{
		\includegraphics[height=3.5in,width=3.5in,keepaspectratio]{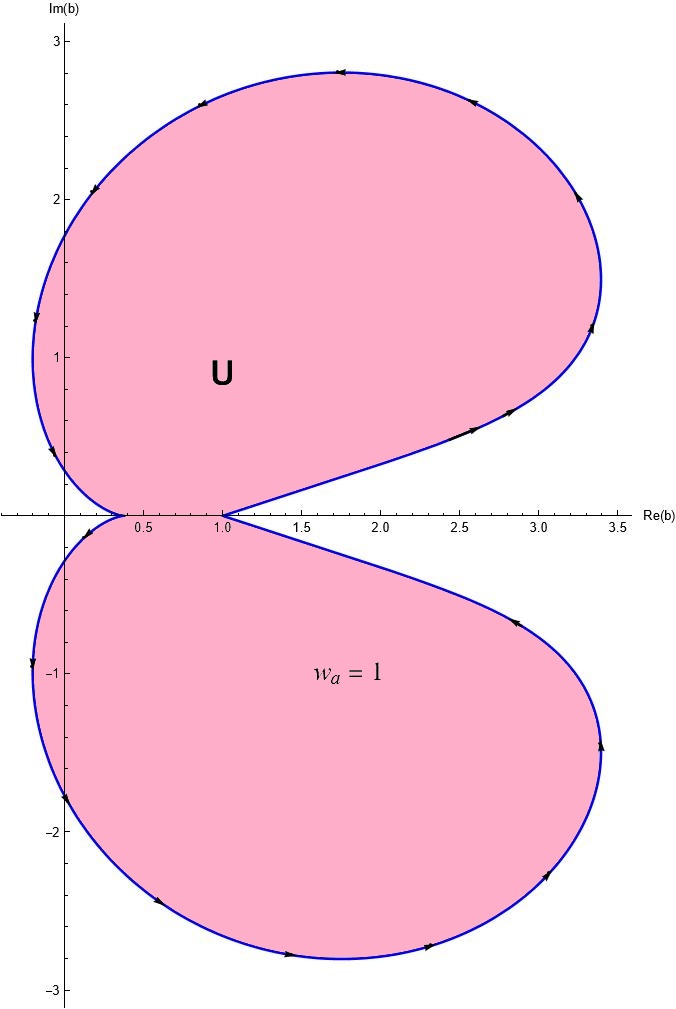}
		\label{case1d}
	}
	\caption{Stability regions for $\alpha = 1.9$ and $\beta = 0.2$ at $a = 2$ (a), $a = 3.24901$ (b), $a = 3.5$ (c), and $a = 3.79051$ (d). The displayed values are $w_a(b)=\operatorname{Ind}_{\gamma}(b)$. Hence, the regions with $w_a=2$ are stable, the bounded regions with $w_a=1$ are unstable, and the unbounded exterior, for which $w_a=0$, is unstable.}
	\label{case1all}
\end{figure}
\subsubsection{Numerical experiments}
In Table \ref{tab1}, we set $\alpha = 1.9$, $\beta = 0.2$, $x(0) = 0.1$, and $x(1) = 0.2$. The corresponding bifurcation values are $a_1 = 3.24901$ and $a_2 = 3.79051$. The winding number is evaluated using \eqref{discrete-winding}; the solution converges to zero exactly for the tested points with $w_a(b)=2$.
\begin{table}[H]
	\centering
	\renewcommand{\arraystretch}{1.5}
	\begin{tabular}{|c|c|c|c|}
		\hline
		\textbf{$a$} & \textbf{$b$} & \textbf{$w_a(b)$} & \textbf{Behavior} \\
		\hline
		
		$2$ & $1.891 - 0.624\mathrm{i}$ & $2$ & Stable \\
		\cline{2-4}
		& $1.661 + 0.9917\mathrm{i}$ & $1$ & Unstable \\
		\cline{2-4}
		& $2.168 - 0.7312\mathrm{i}$ & $1$ & Unstable \\
		\cline{2-4}
		& $2$ & $1$ & Unstable \\
		\hline
		
		$3.24901$ & $0.7$ & $2$ & Stable \\
		\cline{2-4}
		& $1.489 + 1.224\mathrm{i}$ & $1$ & Unstable \\
		\cline{2-4}
		& $0.3925 - 0.1999\mathrm{i}$ & $1$ & Unstable \\
		\hline
		
		$3.5$ & $0.6$ & $2$ & Stable \\ \cline{2-4}
		& $0.9$ & $1$ & Unstable \\
		\hline
		
		$3.79051$ & $-0.08687 - 0.9862\mathrm{i}$ & $1$ & Unstable \\
		\hline
	\end{tabular}
	\caption{Winding numbers and solution behavior for $\alpha = 1.9$, $\beta = 0.2$, and different values of $a$ and $b$.} \label{tab1}
\end{table}

\begin{figure}[H]
	\centering
	\subfigure[$a = 2$, $b = 1.891 - 0.624\mathrm{i}$, $w_a=2$]{
		\includegraphics[height=2.5in,width=2.5in,keepaspectratio]{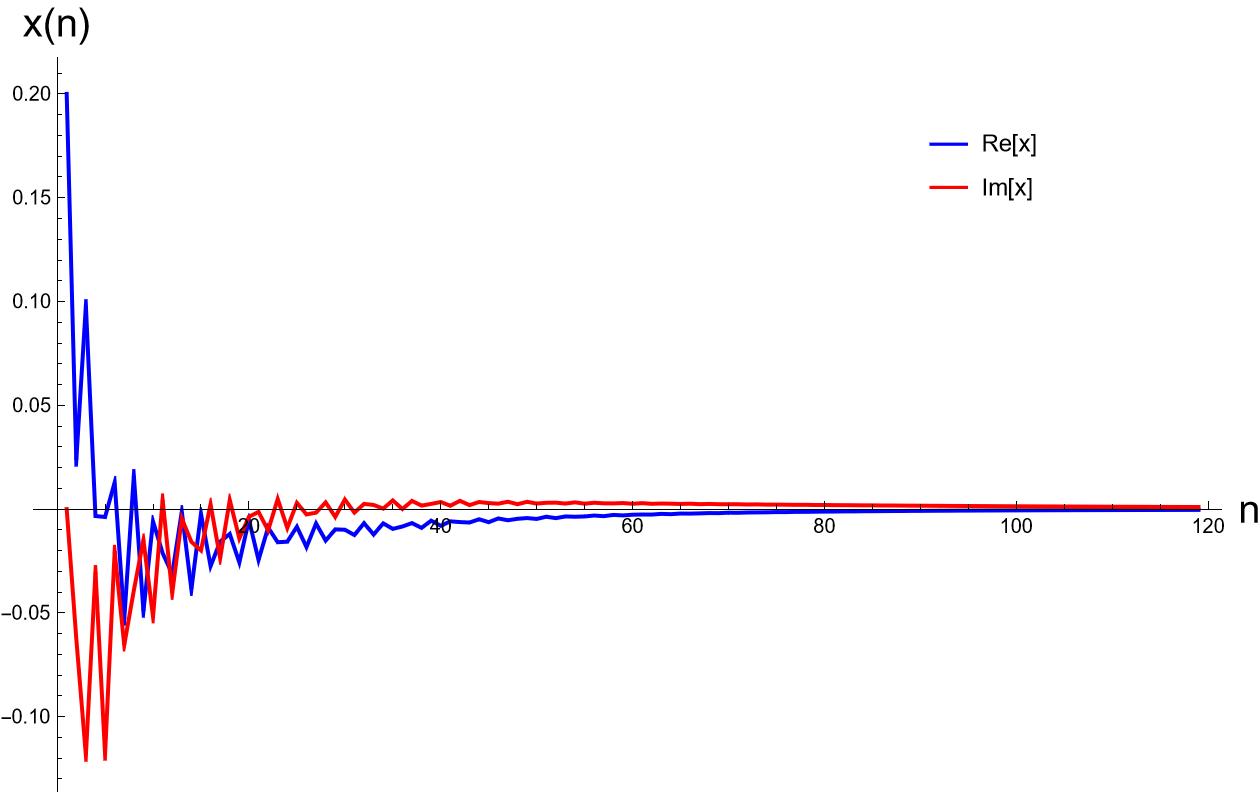}
		\label{coex1a}
	} \hspace{0.3cm}
	\subfigure[$a = 2$, $b = 1.661 + 0.9917\mathrm{i}$, $w_a=1$]{
		\includegraphics[height=2.5in,width=2.5in,keepaspectratio]{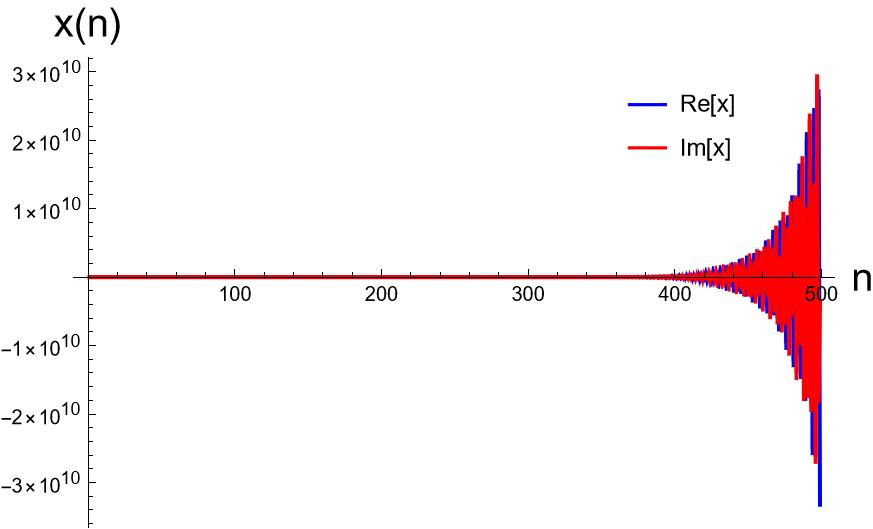}
		\label{coex1b}
	} \hspace{0.3cm}
	\subfigure[$a=3.5$, $b=0.6$, $w_a=2$]{
		\includegraphics[height=2.5in,width=2.5in,keepaspectratio]{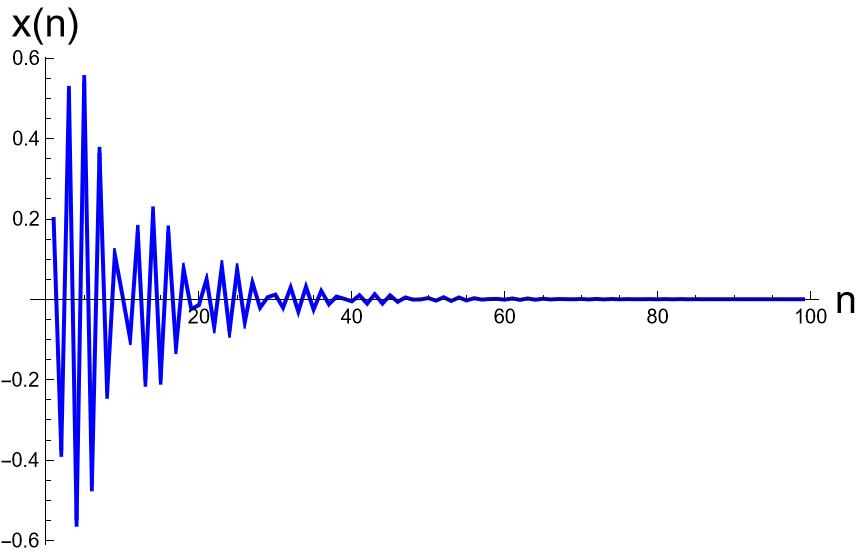}
		\label{coex1h}
	} \hspace{0.3cm}
	\subfigure[$a = 3.79051$, $b = -0.08687 - 0.9862\mathrm{i}$, $w_a=1$]{
		\includegraphics[height=2.5in,width=2.5in,keepaspectratio]{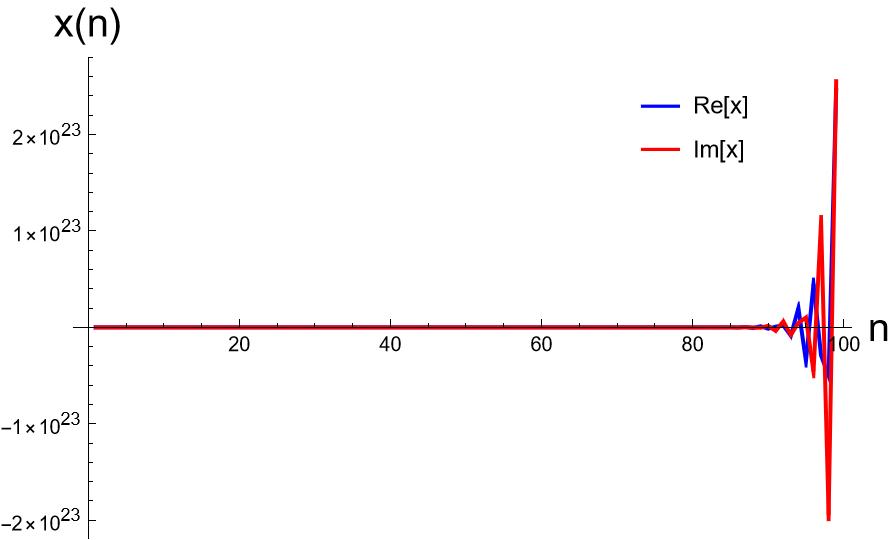}
		\label{coex1j}
	}
	\caption{Representative stable and unstable solutions of system \eqref{1} for $\alpha = 1.9$ and $\beta = 0.2$. The remaining parameter tests are summarized in Table \ref{tab1}.}
	\label{1c1}
\end{figure}

\subsection[Bifurcations for real parameter b]{Bifurcations for $0 < \beta \leq 1 < \alpha \leq 2$, $a > 0$, and $b \in \mathbb{R}$} \label{real case}
If $b \in \mathbb{R}$, the index-two component in the parameter ranges considered below intersects the real axis in an interval. To determine its left endpoint, we first fix $\alpha$, $\beta$, and $a$. We then solve $\gamma_2(\theta) = 0$ for $\theta \in (0,\pi)$ and substitute the resulting value of $\theta$ into \eqref{realpart}. The root was computed using the \texttt{FindRoot} command in Mathematica, and the interval was verified by the winding-number calculation \eqref{discrete-winding}.

\par The right endpoint is obtained by substituting $\theta = 0$ and $\theta = \pi$ into \eqref{realpart}. Thus,
\[
b=
\begin{cases}
	1, & 0 < a < a_{1},\\[6pt]
	1 + 2^{\alpha} - a\,2^{\beta}, & a_{1} \le a < a_2,
\end{cases}
\]
where $a_1$ and $a_2$ are the first and second bifurcation values, respectively.

\par Figure \ref{abplane} shows stability regions in the $(a,b)$-plane for several fixed pairs $(\alpha,\beta)$ in the cases $0 < a \leq a_1$ and $a_1 < a \leq a_2$. Numerical simulations verify the theoretical stability predictions for system \eqref{1}.

\begin{figure}[H]
    \centering
    \subfigure[$0<a \le a_1$ with $\alpha=1.8$ and $\beta=0.5$.]{
\includegraphics[height=2.5in,width=2.5in,keepaspectratio]{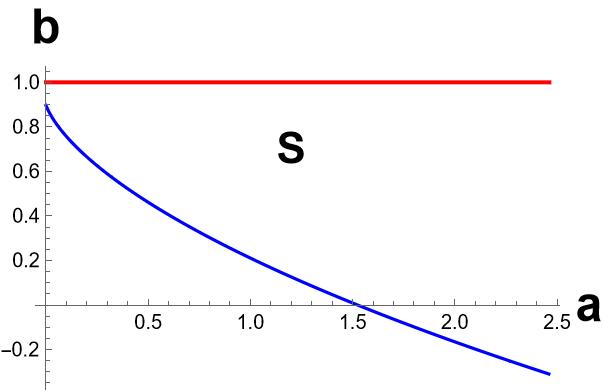}
        \label{ab1a}
     } \hspace{0.3cm}
     \subfigure[$a_1<a \le a_2$ with $\alpha=1.8$ and $\beta=0.5$.]{
     	\includegraphics[height=2.5in,width=2.5in,keepaspectratio]{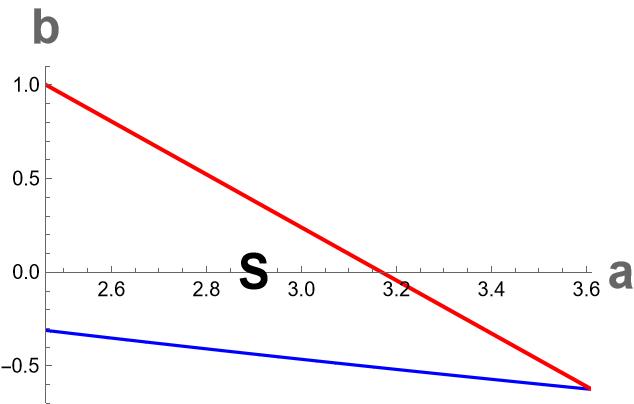}
     	\label{ab1b}
     } \hspace{0.3cm}
     \subfigure[$0<a \le a_1$ with $\alpha=1.2$ and $\beta=0.8$.]{
     	\includegraphics[height=2.5in,width=2.5in,keepaspectratio]{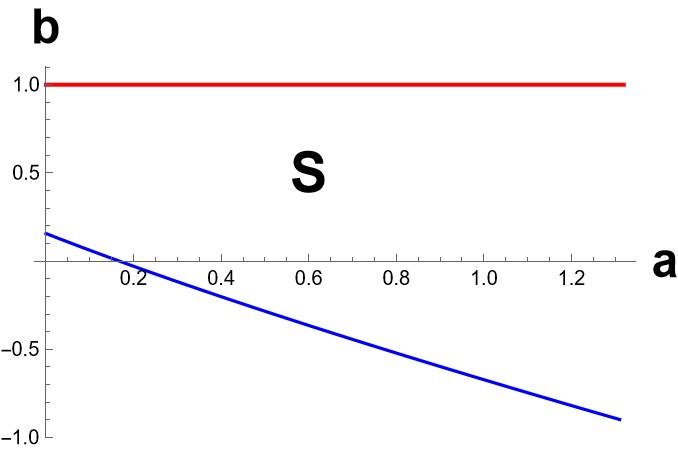}
     	\label{ab2a}
     } \hspace{0.3cm}
    \subfigure[$a_1<a\le a_2$ with $\alpha=1.2$ and $\beta=0.8$.]{
\includegraphics[height=2.5in,width=2.5in,keepaspectratio]{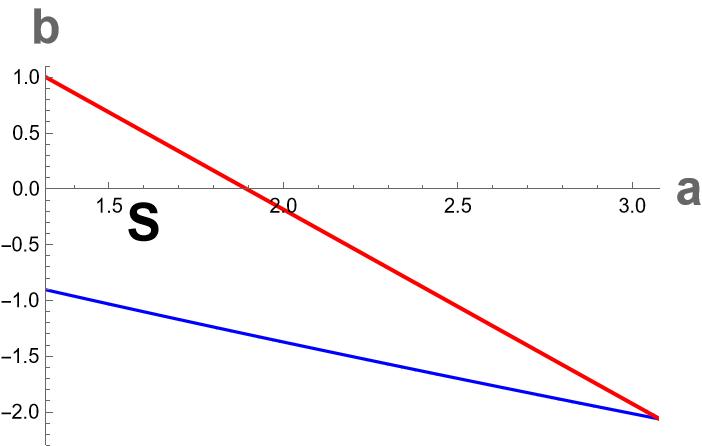}
        \label{ab2b}
    }
    \caption{Stability regions of system \eqref{1} in the $(a,b)$-plane for various values of $\alpha$ and $\beta$. The blue and red curves denote the left and right boundaries, respectively. Every point in the region marked S satisfies $\operatorname{Ind}_{\gamma}(b)=2$; points outside S have index different from two and are unstable.}
    \label{abplane}
\end{figure}

\begin{ex}
    Consider $\alpha = 1.8$ and $\beta = 0.5$. For $a = 1$, which is below the first bifurcation value $a_1 = 2.46229$, system \eqref{1} is stable for $b \in (0.210772,1)$. Formula \eqref{discrete-winding} gives $w_1(0.3)=2$ and $w_1(0.1)=0$. With initial conditions $x(0) = 0.1$ and $x(1) = 0.2$, the first solution converges to zero (Fig. \ref{Rex1a}), whereas the second is unbounded (Fig. \ref{Rex1c}).
    \par For $a = 3$, which lies between $a_1$ and $a_2 = 3.61136$, the stability interval of system \eqref{1} is
    \[
    b \in (-0.464274,0.239562).
    \]
    Here, $w_3(-0.3)=2$, and with $x(0) = 0.2$ and $x(1) = 0.3$, the solution converges to zero (Fig. \ref{Rex1e}).
    \par For $a = 4 > a_2$, the winding-number computation gives no index-two component on the real axis; in particular, $w_4(-3)=0$. With $x(0) = 0.3$ and $x(1) = 0.4$, the solution for $b = -3$ is unbounded (Fig. \ref{Rex1i}).
\end{ex}

\begin{figure}[H]
    \centering
    \subfigure[$b=0.3$, $w_1=2$]{
\includegraphics[height=2.5in,width=2.5in,keepaspectratio]{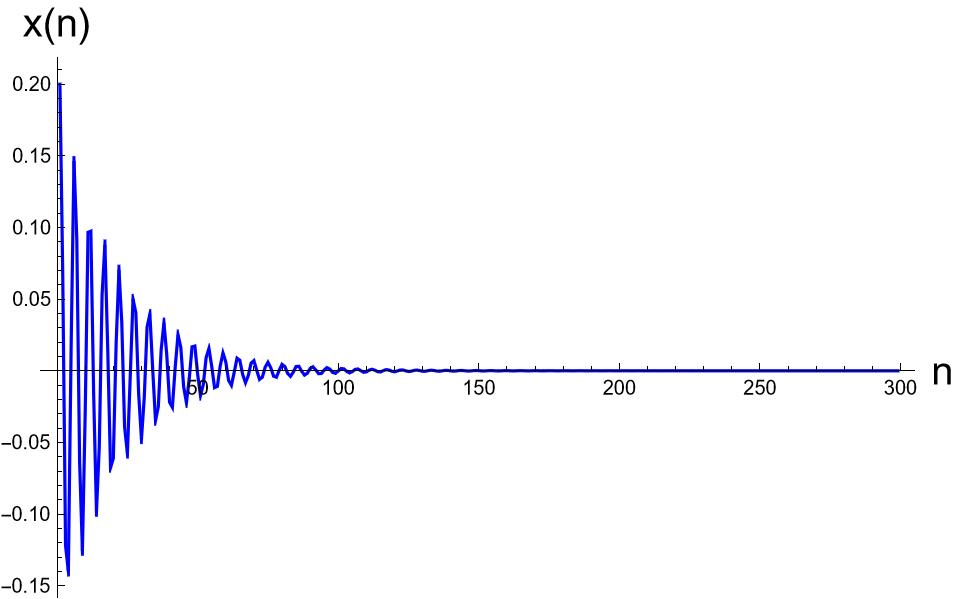}
        \label{Rex1a}
    } \hspace{0.3cm}
     \subfigure[$b=0.1$, $w_1=0$]{
\includegraphics[height=2.5in,width=2.5in,keepaspectratio]{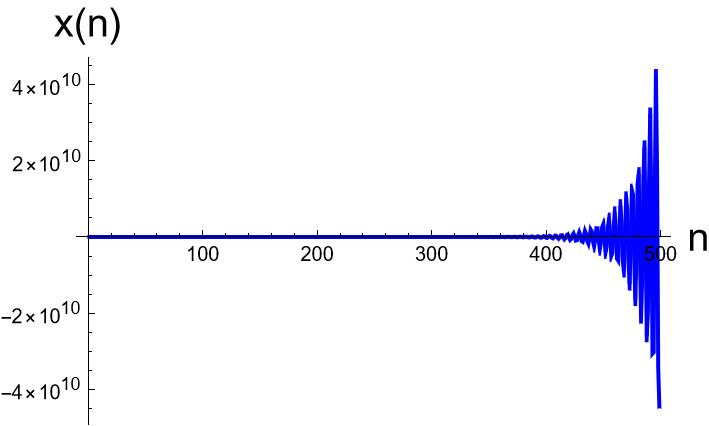}
        \label{Rex1c}
    } \hspace{0.3cm}
     \subfigure[$b=-0.3$, $w_3=2$]{
\includegraphics[height=2.5in,width=2.5in,keepaspectratio]{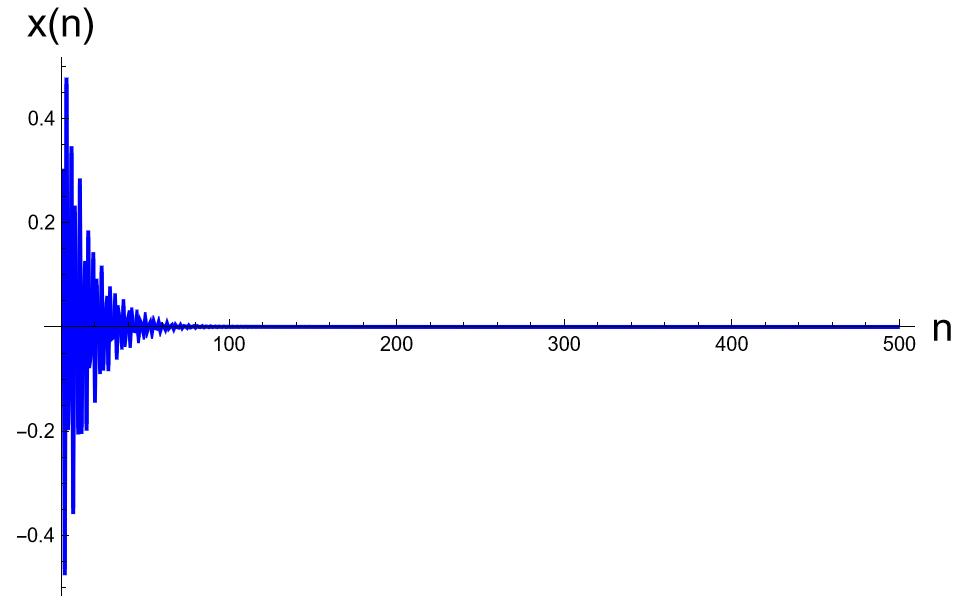}
        \label{Rex1e}
    } \hspace{0.3cm}
    \subfigure[$b=-3$, $w_4=0$]{
    \includegraphics[height=2.5in,width=2.5in,keepaspectratio]{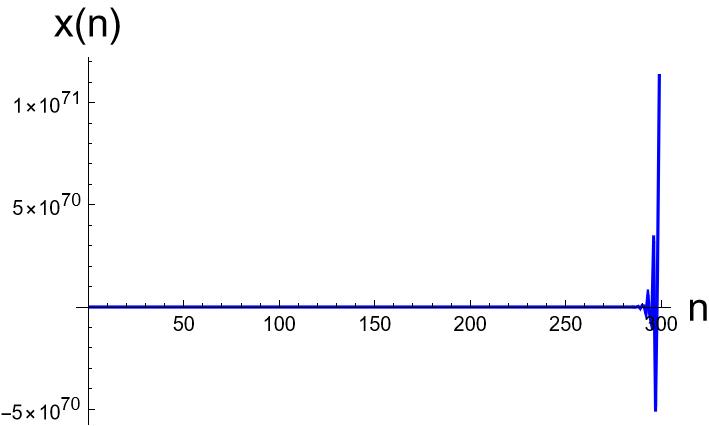}
    \label{Rex1i}
}
    \caption{Representative solutions of system \eqref{1} for values of $b$ inside and outside the stability interval.}
    \label{re}
\end{figure}
\subsection{Nonlinear map}
Consider the higher-order fractional difference equation \eqref{1} with $f(x) = \mu x(1 - x)$:
\begin{equation} 
	\Delta^{\alpha}x(t) + a\,\Delta^{\beta}x(t + \alpha - \beta - 1)
	= \mu x(t + \alpha - 2)\bigl(1 - x(t + \alpha - 2)\bigr) - x(t + \alpha - 2). \label{nonlinear}
\end{equation}
where $\mu$ is the control parameter. We refer to \eqref{nonlinear} as the higher-order logistic map. Its equilibrium points are $x_1^* = 0$ and, when $\mu\neq0$, $x_2^* = 1 - \frac{1}{\mu}$. Moreover, $b = f'(x_1^*) = \mu$ and $b = f'(x_2^*) = 2 - \mu$.

\begin{cor}[Linearized equilibrium criterion]\label{cor:nonlinear}
Let $x^*$ be an equilibrium of \eqref{1}, so that $f(x^*)=x^*$, and set $b^*=f'(x^*)$. The linearization about $x^*$ is asymptotically stable if and only if
\begin{equation*}
\operatorname{Ind}_{\gamma}(b^*)=2.
\end{equation*}
For the logistic nonlinearity, this condition becomes
\begin{equation*}
\operatorname{Ind}_{\gamma}(\mu)=2
\quad\text{at }x_1^*=0,
\qquad
\operatorname{Ind}_{\gamma}(2-\mu)=2
\quad\text{at }x_2^*=1-\frac{1}{\mu}.
\end{equation*}
\end{cor}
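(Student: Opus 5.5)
The corollary is a direct consequence of Theorem \ref{thm:mixed-stability}, so the plan is to linearize and then invoke that theorem. Let $x^*$ be an equilibrium, and write $x(t)=x^*+y(t)$. Because both fractional operators are Caputo-like, they annihilate constants: $\Delta^m x^*=0$ for $m\geq1$, hence $\Delta^{-(m-\mu)}(\Delta^m x^*)=0$. Therefore $\Delta^\alpha x(t)=\Delta^\alpha y(t)$ and $\Delta^\beta x(t+\alpha-\beta-1)=\Delta^\beta y(t+\alpha-\beta-1)$. One can also see this from the recurrence \eqref{seqrep}, where only the differences $\Delta^2x$ and $\Delta x$ enter the memory sums.

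On the right-hand side, use $f\in C^1$ and $f(x^*)=x^*$ to write
\[
f(x^*+y)-(x^*+y)=\bigl(f'(x^*)-1\bigr)y+o(y).
\]
Dropping the $o(y)$ term leaves exactly the linear equation associated with \eqref{1} with $f(x)=b^*x$ and $b^*=f'(x^*)$. Its characteristic function is $D_{b^*}$, and the boundary curve $\gamma$ depends only on $\alpha$, $\beta$ and $a$, not on $b$.

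Next apply Theorem \ref{thm:mixed-stability} with $b=b^*$. Provided $b^*\notin\gamma([0,2\pi])$, the linearization is asymptotically stable if and only if $\operatorname{Ind}_{\gamma}(b^*)=2$. The statement implicitly needs this hypothesis, since the index is undefined on the curve. In that case a characteristic zero lies on $|z|=1$, and the proof of the theorem already shows asymptotic stability fails there. I would therefore state the corollary under the convention that $b^*\notin\gamma([0,2\pi])$, or note that the index condition is understood only off the curve.

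For the logistic map $f(x)=\mu x(1-x)$, we have $f'(x)=\mu-2\mu x$. This gives $f'(0)=\mu$ and $f'(1-1/\mu)=\mu-2\mu+2=2-\mu$. Substituting these two values for $b^*$ yields the two displayed conditions.

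The main obstacle is only one of precision: the claim is about the linearization, not about nonlinear stability. No linearized-stability theorem for fractional difference equations with memory is needed, so the argument reduces to the algebraic verification that the constant $x^*$ drops out of the Caputo-like operators. That verification is the step I would check carefully at the integer endpoints $\alpha=2$ and $\beta=1$, where the operators are ordinary forward differences and the conclusion is immediate.
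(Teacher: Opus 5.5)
Your proposal is correct and follows the paper's proof. You substitute $x=x^*+y$, Taylor-expand $f$ about the fixed point, apply Theorem \ref{thm:mixed-stability} with $b=b^*$, and differentiate the logistic map to get $\mu$ and $2-\mu$. Your additional remarks make explicit two points the paper leaves tacit: the Caputo-like operators annihilate the constant $x^*$, and the index condition requires $b^*\notin\gamma([0,2\pi])$.
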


\begin{proof}
Write $x(t)=x^*+y(t)$. Since $f\in C^1$,
\begin{equation*}
f(x^*+y)-f(x^*)=f'(x^*)y+o(|y|).
\end{equation*}
Retaining the linear term gives equation \eqref{1} with $b=b^*$. Theorem \ref{thm:mixed-stability} therefore yields the stated linearized criterion. For the logistic map, direct differentiation gives $f'(x_1^*)=\mu$ and $f'(x_2^*)=2-\mu$.
\end{proof}

The numerical trajectories below use finite perturbations of the equilibria and illustrate the linearized prediction; they are not used to define an additional empirical stability threshold. Set $\alpha = 1.2$ and $\beta = 0.8$, for which $a_1 = 1.31951$ and $a_2 = 3.07885$. The trivial and nontrivial equilibria are examined in Examples \ref{NonlinearCaseEx1} and \ref{NonlinearCaseEx2}, respectively.

\begin{ex}	 
 We choose $x(0) = 0.1$ and $x(1) = 0.2$ in a neighborhood of $x_1^* = 0$. For $a = 0.5 < a_1$, the linearization at $x_1^*$ is asymptotically stable for $\mu \in (-0.284222,1)$. Since $f'(x_1^*)=\mu$, the corresponding winding numbers are $w_{0.5}(-0.27)=2$ and $w_{0.5}(-1)=0$. Accordingly, the solution for $\mu = -0.27$ converges to zero (Fig. \ref{nonlinear_ex1a}), whereas the solution for $\mu = -1$ does not converge to $x_1^*$ (Fig. \ref{nonlinear_ex1b}).
 
 \par For $a = 2.3$, which lies between $a_1$ and $a_2$, the theoretical stability interval is
 \[
 \mu \in (-1.57005,-0.707136).
 \]
 Here, $w_{2.3}(-1.4)=2$ and $w_{2.3}(-1.7)=0$. The first solution converges to zero (Fig. \ref{nonlinear_ex1c}), whereas the second moves away from $x_1^*$ (Fig. \ref{nonlinear_ex1d}).
 \par For $a = 3.5 > a_2$, the linearization at $x_1^*$ is unstable for every $\mu \in \mathbb{R}$. In particular, $w_{3.5}(-1.1)=1$ and $w_{3.5}(2.7)=0$, and both displayed solutions are unbounded (Figs. \ref{nonlinear_ex1e} and \ref{nonlinear_ex1f}).\label{NonlinearCaseEx1}
 \end{ex}

\begin{figure}[H]
 	\centering
 	\subfigure[$\mu=-0.27, a=0.5$]{
 		\includegraphics[height=2.5in,width=2.5in,keepaspectratio]{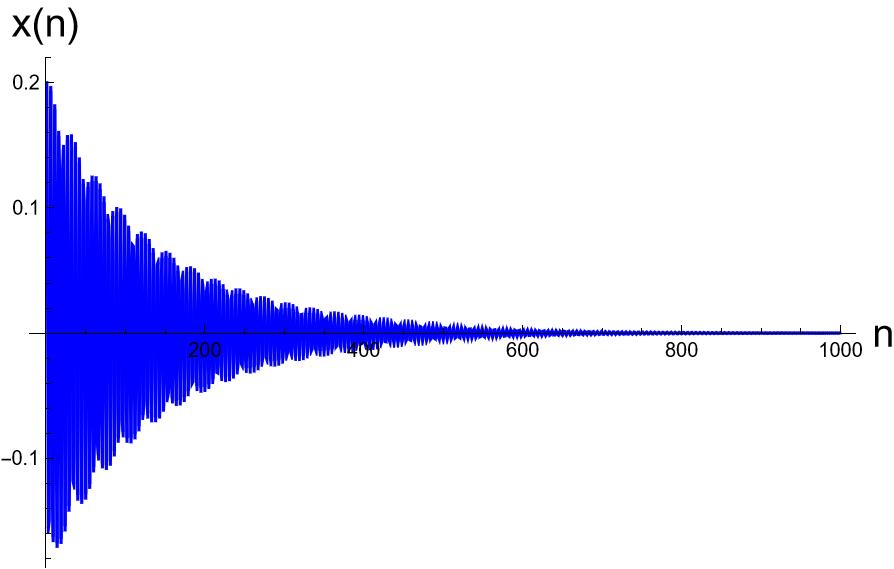}
 		\label{nonlinear_ex1a}
 	} \hspace{0.3cm}
 	\subfigure[$\mu=-1, a=0.5$]{
 		\includegraphics[height=2.5in,width=2.5in,keepaspectratio]{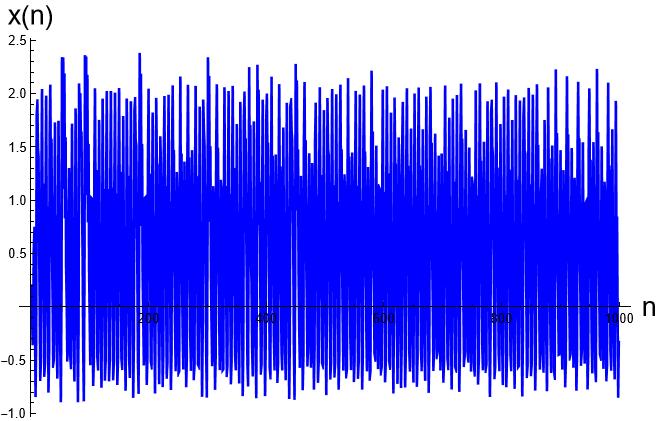}
 		\label{nonlinear_ex1b}
 	} \hspace{0.3cm}
 	\subfigure[$\mu=-1.4, a=2.3$]{
 		\includegraphics[height=2.5in,width=2.5in,keepaspectratio]{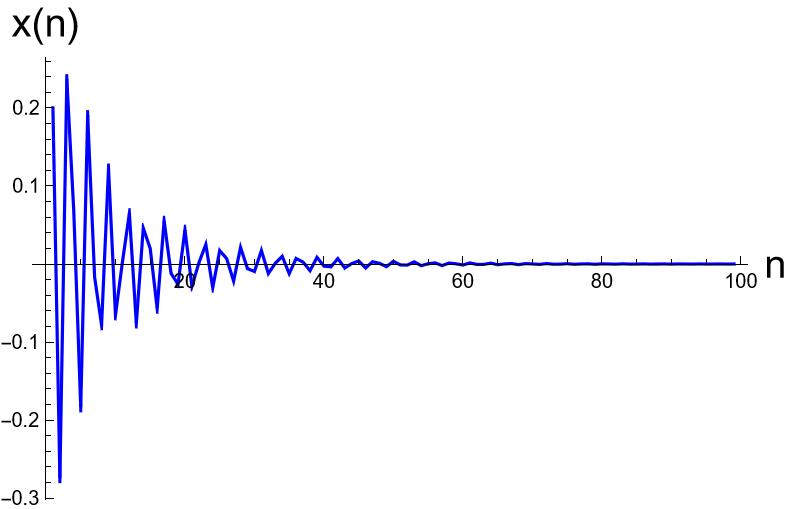}
 		\label{nonlinear_ex1c}
 	} \hspace{0.3cm}
 	\subfigure[$\mu=-1.7, a=2.3$]{
 		\includegraphics[height=2.5in,width=2.5in,keepaspectratio]{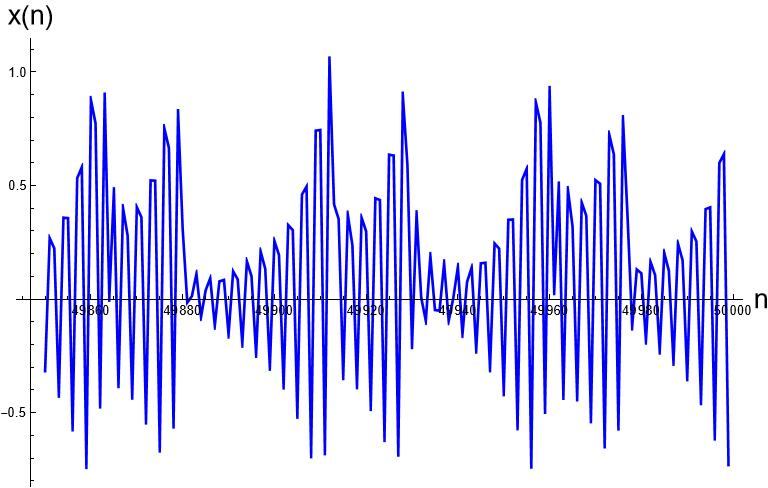}
 		\label{nonlinear_ex1d}
 	} \hspace{0.3cm}
 	\subfigure[$\mu=-1.1, a=3.5$]{
 		\includegraphics[height=2.5in,width=2.5in,keepaspectratio]{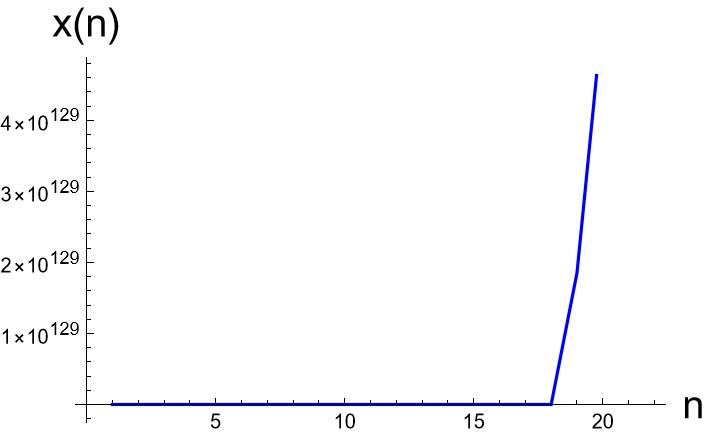}
 		\label{nonlinear_ex1e}
 	} \hspace{0.3cm}
 	\subfigure[$\mu=2.7, a=3.5$]{
 		\includegraphics[height=2.5in,width=2.5in,keepaspectratio]{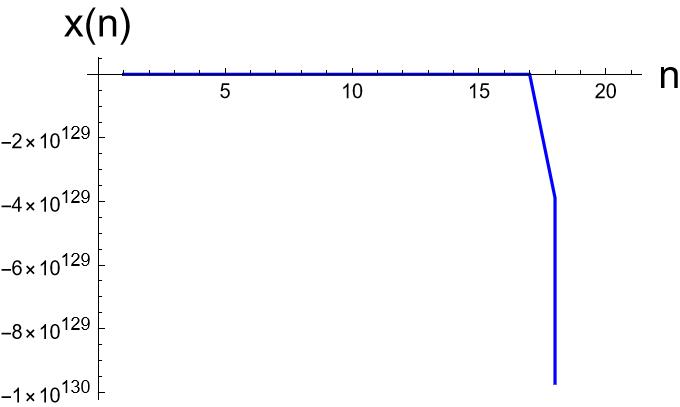}
 		\label{nonlinear_ex1f}
 	} 
	\caption{Solutions of system \eqref{nonlinear} for different values of $\mu$ and $a$ near the trivial equilibrium.}
 	\label{nonlinear_ex1}
 \end{figure}
 
 \begin{ex}
 Since $x_2^*$ depends on $\mu$, we set $x(0) = x_2^* + 0.1$ and $x(1) = x_2^* - 0.1$. For $a = 0.5$, Corollary \ref{cor:nonlinear} gives the linearized stability interval $\mu \in (1,2.284222)$. Because $f'(x_2^*)=2-\mu$, the choices $\mu=2.2$ and $\mu=3$ correspond to $w_{0.5}(-0.2)=2$ and $w_{0.5}(-1)=0$, respectively. Thus, the first trajectory converges slowly to $x_2^*=0.5455$ (Fig. \ref{nonlinear_ex1g}), whereas the second does not converge to $x_2^*=0.6667$ (Fig. \ref{nonlinear_ex1h}).
 
 \par For $a = 2.3$, the choices $\mu = -5$ and $\mu = 9$ give $f'(x_2^*)=7$ and $f'(x_2^*)=-7$, respectively. Both values have winding number zero and lie outside the linearized stability set; the corresponding solutions are unbounded (Figs. \ref{nonlinear_ex1i} and \ref{nonlinear_ex1j}). \label{NonlinearCaseEx2}
 \end{ex}
 \begin{figure}[H]
 	\centering
 	\subfigure[$\mu=2.2, a=0.5$]{
 		\includegraphics[height=2.5in,width=2.5in,keepaspectratio]{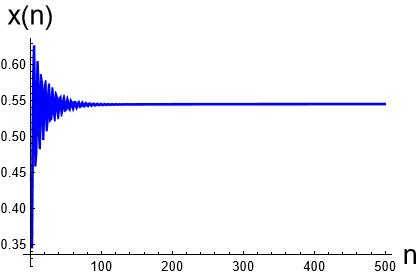}
 		\label{nonlinear_ex1g}
 	} \hspace{0.3cm}
 	\subfigure[$\mu=3, a=0.5$]{
 		\includegraphics[height=2.5in,width=2.5in,keepaspectratio]{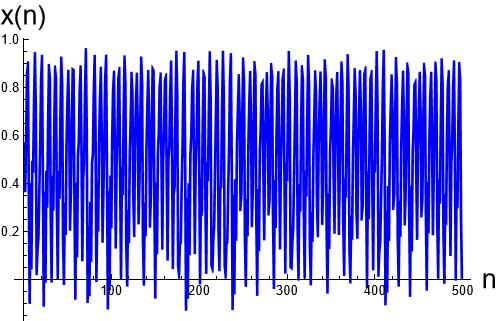}
 		\label{nonlinear_ex1h}
 	} \hspace{0.3cm}
 	\subfigure[$\mu=-5, a=2.3$]{
 		\includegraphics[height=2.5in,width=2.5in,keepaspectratio]{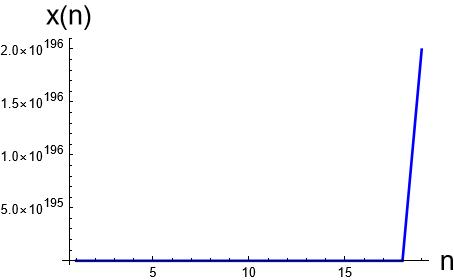}
 		\label{nonlinear_ex1i}
 	} \hspace{0.3cm}
 	\subfigure[$\mu=9, a=2.3$]{
 		\includegraphics[height=2.5in,width=2.5in,keepaspectratio]{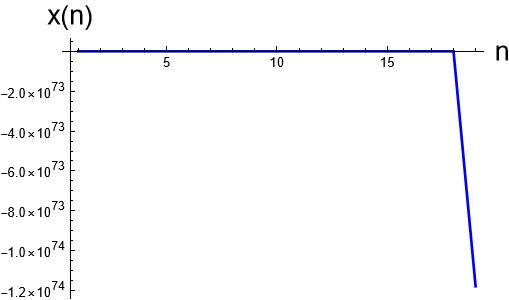}
 		\label{nonlinear_ex1j}
 	} 
	\caption{Solutions of system \eqref{nonlinear} for different values of $\mu$ and $a$ near the nontrivial equilibrium.}
 	\label{nonlinear_ex2}
 \end{figure}
  
\section{General higher-order case} \label{2model}

Consider the IVP
\begin{equation}
	\Delta^{\alpha}x(t) = (c - 1)x(t + \alpha - N), \label{Model}
\end{equation}
\[ x(i) = x_i, \qquad 0 \leq i \leq N - 1, \]
where $c \in \mathbb{C}$, $t \in \mathbb{N}_{N-\alpha}$, $N - 1 < \alpha \leq N$, and $N \in \mathbb{N}$.

The derivation below is written for $N-1<\alpha<N$. At the endpoint $\alpha=N$, the fractional operator is the ordinary $N$th forward difference, and the same characteristic equation follows directly.

By the definition of the Caputo-like difference operator, \eqref{Model} can be written as
\begin{equation}
	\left(\Delta^{-(N-\alpha)}\bigl(\Delta^N x\bigr)\right)(t)
	= (c - 1)x(t + \alpha - N), \label{def1}
\end{equation}
where $t = N - \alpha + n$ and $n \in \mathbb{N}_0$.

From the definition of a fractional sum, we have
\begin{equation}
	\left(\Delta^{-(N-\alpha)}y\right)(t)
	= \sum_{s=0}^{n} \binom{n - s + N - \alpha - 1}{n - s}y(s). \label{def2}
\end{equation}

Define the family of binomial functions on $\mathbb{Z}$, parameterized by $\mu \in \mathbb{R}$, by
\begin{equation}
\tilde{\phi}_{\mu}(n) =
\begin{cases}
	\displaystyle \binom{n + \mu - 1}{n}, & n \in \mathbb{N}_0, \\
	0, & n < 0.
\end{cases}
\end{equation}
Then \eqref{def2} can be written as
\begin{equation}
	\left(\Delta^{-(N-\alpha)}y\right)(t)
	= \sum_{s=0}^{n} \tilde{\phi}_{N-\alpha}(n - s)y(s). \label{def3}
\end{equation}
Consequently, \eqref{def1} becomes

\begin{equation}
	\sum_{s=0}^{n} \tilde{\phi}_{N-\alpha}(n - s)\Delta^N x(s) = (c - 1)x(n), \label{form}
\end{equation}
where $n = t + \alpha - N$.

Equation \eqref{form} can be expressed as the convolution of $\tilde{\phi}_{N-\alpha}$ and $\Delta^N x$:
\begin{equation}
	(\tilde{\phi}_{N-\alpha} \ast \Delta^N x)(n) = (c - 1)x(n), \label{convolution}
\end{equation}
where $\ast$ denotes convolution.

We have \cite{mozyrska2015z},
\begin{equation}
	\mathcal{Z}[\tilde{\phi}_{\alpha}](z) = \left(\frac{z}{z - 1}\right)^{\alpha}, \label{formula1}
\end{equation}

and \cite{elaydi2005introduction},
\begin{equation}
	\mathcal{Z}[\Delta^k x(n)] = (z - 1)^k X(z)
	- z\sum_{j=0}^{k-1}(z - 1)^{k-j-1}\Delta^j x(0). \label{formula2}
\end{equation}

Applying the Z-transform to \eqref{convolution} and using \eqref{formula1} and \eqref{formula2}, we obtain
\begin{equation}
	\left(\frac{z}{z - 1}\right)^{N-\alpha}
	\left((z - 1)^N X(z) - \text{terms involving the initial conditions}\right)
	= (c - 1)X(z). \label{in.val.term}
\end{equation}  
where $X(z)$ is the Z-transform of $x$.

Setting the coefficient of $X(z)$ in \eqref{in.val.term} equal to zero gives the characteristic equation of system \eqref{Model}:
\begin{equation}
	z^N(1 - z^{-1})^{\alpha} + 1 = c. \label{char.eqn}
\end{equation}
Substituting $z = e^{\mathrm{i}\theta}$ into \eqref{char.eqn} gives the complex boundary curve
\begin{equation}\label{Gamma-complex}
\Gamma_N(\theta)
=1+2^\alpha\left(\sin\frac{\theta}{2}\right)^\alpha
e^{\mathrm{i}\left[\frac{\alpha\pi}{2}+\theta\left(N-\frac{\alpha}{2}\right)\right]},
\qquad 0\leq\theta\leq2\pi.
\end{equation}
Consequently, the real parametrization is
\begin{align}
\Gamma_N(\theta)=\Biggl(&1+2^\alpha\left(\sin\frac{\theta}{2}\right)^\alpha
\cos\left(\frac{\alpha\pi}{2}+\theta\left(N-\frac{\alpha}{2}\right)\right), \nonumber\\
&2^\alpha\left(\sin\frac{\theta}{2}\right)^\alpha
\sin\left(\frac{\alpha\pi}{2}+\theta\left(N-\frac{\alpha}{2}\right)\right)\Biggr).
\label{Gammacurve}
\end{align}

\begin{thm}[General winding-number stability criterion]\label{thm:general-stability}
Let $N\in\mathbb{N}$ and $N-1<\alpha\leq N$. If $c\notin\Gamma_N([0,2\pi])$, then system \eqref{Model} is asymptotically stable if and only if
\begin{equation}\label{general-index}
\operatorname{Ind}_{\Gamma_N}(c)=N.
\end{equation}
\end{thm}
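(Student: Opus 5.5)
We will follow the same argument as in the proof of Theorem \ref{thm:mixed-stability}, now applied to the characteristic function
\begin{equation*}
E_c(z)=z^N(1-z^{-1})^\alpha+1-c ,
\end{equation*}
taken with the principal branch. Applying the Z-transform to \eqref{convolution} gives \eqref{in.val.term}, which has the form $E_c(z)X(z)=P(z)$, with $P$ determined by the initial values $x_0,\dots,x_{N-1}$. As in Theorem \ref{thm:mixed-stability}, we use the characteristic-root criterion for delta fractional difference equations. Under it, asymptotic stability is equivalent to $E_c$ having no zeros in $|z|\ge 1$.

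First, we check analyticity. For $|z|>1$ we have $\operatorname{Re}(1-z^{-1})\ge 1-|z|^{-1}>0$, so $E_c$ is analytic on the exterior of the closed unit disk. For $\alpha=N$ it is simply a polynomial. Let $n_{\mathrm{out}}(c)$ be the number of zeros of $E_c$ in $|z|>1$, counted with multiplicity. Fix $R$ large and apply the annular form of Proposition \ref{prop:argument-principle} on $1+\varepsilon<|z|<R$, then let $\varepsilon\downarrow 0$. This limit is legitimate because $c\notin\Gamma_N([0,2\pi])$. Indeed, $E_c(e^{\mathrm{i}\theta})=\Gamma_N(\theta)-c$ never vanishes, and $E_c$ extends continuously to $|z|=1$, including at $z=1$, where $(1-z^{-1})^\alpha\to0$ since $\alpha>0$. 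The result is
\begin{equation*}
n_{\mathrm{out}}(c)=\frac{1}{2\pi\mathrm{i}}\int_{|z|=R}\frac{E_c'}{E_c}\,dz-\operatorname{Ind}_{\Gamma_N}(c).
\end{equation*}

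Second, we evaluate the outer integral. Since $E_c(z)/z^N=(1-z^{-1})^\alpha+(1-c)z^{-N}\to1$ uniformly on $|z|=R$, we can take $R$ with $\sup_{|z|=R}|E_c(z)/z^N-1|<1$. The homotopy $H_s(z)=z^N[1+s(E_c(z)/z^N-1)]$ then avoids the origin, so the outer image has winding number $N$. Hence
\begin{equation*}
n_{\mathrm{out}}(c)=N-\operatorname{Ind}_{\Gamma_N}(c).
\end{equation*}
Because $c$ is not on the curve, there are no zeros on $|z|=1$. Stability is therefore equivalent to $n_{\mathrm{out}}(c)=0$, that is, to $\operatorname{Ind}_{\Gamma_N}(c)=N$.

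The main obstacle is the inner limit $\varepsilon\downarrow0$ near the branch point $z=1$. There $E_c$ is not analytic on the unit circle, and $\Gamma_N$ is only piecewise $C^1$, with $\Gamma_N'$ possibly singular at $\theta=0$ because of the factor $(\sin\frac{\theta}{2})^{\alpha-1}$. We plan to handle this exactly as in Theorem \ref{thm:mixed-stability}, by interpreting the integrals as winding numbers of continuous closed curves. The inner circle images $E_c((1+\varepsilon)e^{\mathrm{i}\theta})$ converge uniformly to $\Gamma_N-c$, which stays at positive distance from $0$. So for small $\varepsilon$ these curves are homotopic to $\Gamma_N-c$ in $\mathbb{C}\setminus\{0\}$, and the winding numbers agree. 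For the integer endpoint $\alpha=N$, no branch issue arises at all: $E_c$ is a degree-$N$ polynomial, and the argument principle applies directly.
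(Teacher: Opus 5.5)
Your proposal is correct and follows the paper's proof essentially step for step. You use the same principal-branch characteristic function and the annular argument principle on $1+\varepsilon<|z|<R$. The inner integral is identified with $\operatorname{Ind}_{\Gamma_N}(c)$, and the outer winding number is $N$ because $E_c(z)/z^N\to1$, which yields $n_{\mathrm{out}}(c)=N-\operatorname{Ind}_{\Gamma_N}(c)$. Your extra points make explicit what the paper leaves implicit: the uniform convergence of the inner-circle images to $\Gamma_N-c$, the homotopy in $\mathbb{C}\setminus\{0\}$ that justifies the limit $\varepsilon\downarrow0$, and the separate remark that $E_c$ is a polynomial when $\alpha=N$.
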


\begin{proof}
The characteristic function is
\begin{equation*}
D_c(z)=z^N(1-z^{-1})^\alpha+1-c.
\end{equation*}
As in the proof of Theorem \ref{thm:mixed-stability}, the principal branch of $D_c$ is analytic for $|z|>1$, and asymptotic stability is equivalent to the absence of characteristic zeros in the exterior of the closed unit disk. Applying the argument principle to $1+\varepsilon<|z|<R$ and letting $\varepsilon\downarrow0$ gives
\begin{equation*}
n_{\mathrm{out}}(c)
=\frac{1}{2\pi\mathrm{i}}\int_{|z|=R}\frac{D_c'(z)}{D_c(z)}\,dz
-\frac{1}{2\pi\mathrm{i}}\int_{|z|=1}\frac{D_c'(z)}{D_c(z)}\,dz.
\end{equation*}
On the unit circle, $D_c(e^{\mathrm{i}\theta})=\Gamma_N(\theta)-c$, so the inner-boundary integral is $\operatorname{Ind}_{\Gamma_N}(c)$. Since $D_c(z)/z^N\to1$ uniformly as $R\to\infty$, the outer-boundary integral equals $N$ for sufficiently large $R$. Hence,
\begin{equation*}
n_{\mathrm{out}}(c)=N-\operatorname{Ind}_{\Gamma_N}(c).
\end{equation*}
There are no characteristic zeros on $|z|=1$ because $c\notin\Gamma_N([0,2\pi])$. Thus, asymptotic stability is equivalent to $n_{\mathrm{out}}(c)=0$, which proves \eqref{general-index}.
\end{proof}

\begin{Lem}[Winding-number bound]\label{lem:winding-bound}
Set $q=N-\alpha/2$. For every $c\notin\Gamma_N([0,2\pi])$,
\begin{equation}\label{index-bound}
\left|\operatorname{Ind}_{\Gamma_N}(c)\right|\leq\lceil q\rceil.
\end{equation}
\end{Lem}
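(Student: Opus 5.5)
We will bound the total variation of the argument of $\Gamma_N(\theta)-c$ over $[0,2\pi]$. Write
\[
\Gamma_N(\theta)-c=(1-c)+\rho(\theta)e^{\mathrm{i}\psi(\theta)},\qquad \rho(\theta)=2^\alpha\left(\sin\tfrac{\theta}{2}\right)^\alpha\ge0,\qquad \psi(\theta)=\tfrac{\alpha\pi}{2}+q\theta .
\]
So the curve is a point $w=c-1$ compared against a radial-phase curve $\rho e^{\mathrm{i}\psi}$, and $\operatorname{Ind}_{\Gamma_N}(c)$ is the winding number of $\eta(\theta)=\rho(\theta)e^{\mathrm{i}\psi(\theta)}$ about $w$. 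The key geometric fact is that the phase $\psi$ increases linearly, with total increment $2\pi q$. The radius $\rho$ is increasing on $[0,\pi]$ and decreasing on $[\pi,2\pi]$, and it vanishes only at the endpoints.

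The plan is to count crossings of a ray. Since $\eta$ is closed and avoids $w$, the index equals the signed number of intersections of $\eta$ with any ray emanating from $w$, for a generic ray. We choose the ray $L=\{w+t e^{\mathrm{i}\varphi}:t\ge0\}$ with direction $\varphi$ equal to $\arg w$ if $w\neq0$. This ray points radially away from the origin, so $L\subset\{r e^{\mathrm{i}\arg w}: r\ge|w|\}$. If $w=0$, the index is directly the total phase change of $\eta$ about $0$, namely $\psi(2\pi)-\psi(0)=2\pi q$ divided by $2\pi$. That gives $q$, which must be an integer since the index is; in particular the bound holds.

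For $w\neq0$, every intersection of $\eta$ with $L$ occurs at a parameter where $\psi(\theta)\equiv\arg w \pmod{2\pi}$. Because $\psi$ is strictly increasing with total increment $2\pi q$, there are at most $\lceil q\rceil$ such parameters in $(0,2\pi)$; the endpoints give $\eta=0\notin L$. Perturbing $\varphi$ slightly preserves genericity and keeps the count at most $\lceil q\rceil$, since the count of solutions of $\psi(\theta)\equiv\varphi$ on an interval of length $2\pi q$ is at most $\lceil q\rceil$ for any $\varphi$. Each crossing contributes $\pm1$, so we obtain $|\operatorname{Ind}|\le\lceil q\rceil$.

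The main obstacle is making the ray-crossing count rigorous, in particular transversality and the genericity of the ray. We would avoid this by choosing $\varphi$ so that $L$ misses the finitely many nonsmooth or tangency points. Alternatively, the winding number can be written as $\sum\pm1$ over preimages of a regular value of the angle map $\theta\mapsto\arg(\eta(\theta)-w)$. Preimages of direction $\varphi$ are contained in the solution set of $\psi(\theta)\equiv\varphi$, because the ray points away from the origin along $e^{\mathrm{i}\varphi}$: any point of $L$ has argument $\varphi$ about $0$. This lets us use Sard-type genericity on the smooth interior.
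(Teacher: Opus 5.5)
Your core argument is the paper's. The paper multiplies $\Gamma_N-c$ by $e^{-\mathrm{i}\arg(1-c)}$ and counts crossings of the negative real axis. That ray is exactly your $L$, the ray from $c$ in the direction of $c-1$, and the paper bounds the crossings by the number of solutions of the same phase congruence.

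The step that would fail is your repair of genericity. If you rotate the ray about its base point $w$ to a direction $\varphi'\neq\arg w$, its points no longer have argument $\varphi'$ about the origin; along the ray the argument moves from $\arg w$ towards $\varphi'$. So intersections with the rotated ray are not solutions of $\psi(\theta)\equiv\varphi'$. The fact that this congruence has at most $\lceil q\rceil$ solutions for every $\varphi'$ then tells you nothing about them. The Sard variant has the same defect: a regular value of $\theta\mapsto\arg(\eta(\theta)-w)$ need not equal $\arg w$, and your containment argument holds only for that direction. (Your case $w=0$ is vacuous, since $c=1=\Gamma_N(0)$ lies on the curve. The phase-change computation there would be invalid anyway, because $\eta$ passes through $0$.)

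No perturbation is needed. If $\theta_0\in(0,2\pi)$ and $\eta(\theta_0)\in L$, then $\rho(\theta_0)>|w|>0$ and
\[
\eta'(\theta_0)=\bigl(\rho'(\theta_0)+\mathrm{i}\,q\,\rho(\theta_0)\bigr)e^{\mathrm{i}\arg w},
\]
whose component normal to $L$ is $q\rho(\theta_0)>0$. Hence every intersection is a transversal crossing, and every crossing has sign $+1$. This even yields $0\le\operatorname{Ind}_{\Gamma_N}(c)\le\lceil q\rceil$, and shows that the tangential contacts the paper dismisses in one sentence never occur.

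Alternatively, you can bypass transversality altogether. A continuous lift $A$ of $\arg(\eta(\theta)-w)$ satisfies $A(2\pi)-A(0)=2\pi\operatorname{Ind}_{\Gamma_N}(c)$ and $A(0)\equiv\arg w+\pi\pmod{2\pi}$. By the intermediate value theorem, $A$ attains at least $|\operatorname{Ind}_{\Gamma_N}(c)|$ distinct values in $\arg w+2\pi\mathbb{Z}$, at distinct parameters. Each such parameter places $\eta$ on $L$, so $|\operatorname{Ind}_{\Gamma_N}(c)|\le\lceil q\rceil$.
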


\begin{proof}
From \eqref{Gammacurve}, write
\begin{equation*}
\Gamma_N(\theta)
=1+r(\theta)e^{\mathrm{i}(\alpha\pi/2+q\theta)},
\qquad
r(\theta)=2^\alpha\left(\sin\frac{\theta}{2}\right)^\alpha.
\end{equation*}
Also write $1-c=\rho e^{\mathrm{i}\varphi}$. Since $c=1$ belongs to the boundary curve, $c\notin\Gamma_N([0,2\pi])$ implies $\rho>0$. We translate the curve by $-c$ so that the point about which the winding number is computed becomes the origin, and then rotate it through the fixed angle $-\varphi$. The resulting curve is
\begin{equation*}
\begin{split}
u(\theta)
&=e^{-\mathrm{i}\varphi}\bigl(\Gamma_N(\theta)-c\bigr)\\
&=\rho+r(\theta)e^{\mathrm{i}\chi(\theta)},
\end{split}
\end{equation*}
where
\begin{equation*}
\chi(\theta)=\frac{\alpha\pi}{2}-\varphi+q\theta.
\end{equation*}
Translation gives
\begin{equation*}
\operatorname{Ind}_{\Gamma_N}(c)
=\operatorname{Ind}_{\Gamma_N-c}(0),
\end{equation*}
and multiplication by $e^{-\mathrm{i}\varphi}$ is only a rotation about the origin, so it does not change the winding number. Consequently,
\begin{equation*}
\operatorname{Ind}_{\Gamma_N}(c)=\operatorname{Ind}_{u}(0).
\end{equation*}
The purpose of this normalization is that the constant term $1-c$ becomes the positive real number $\rho$, which makes the crossings of the negative real axis easy to count.

The winding number of a closed curve about the origin is the signed number
of times the curve crosses any fixed ray from the origin. We use the
negative real axis. If $u(\theta)$ lies on this ray, then
\begin{equation*}
\operatorname{Im}u(\theta)=r(\theta)\sin\chi(\theta)=0
\end{equation*}
and its real part is negative. Since $r(\theta)>0$ for
$0<\theta<2\pi$, this is possible only when
\begin{equation*}
\chi(\theta)=(2k+1)\pi
\end{equation*}
for some integer $k$; the additional condition $r(\theta)>\rho$ only
reduces the number of crossings. The function $\chi$ is strictly
increasing and
\begin{equation*}
\chi(2\pi)-\chi(0)=2\pi q.
\end{equation*}
Successive odd multiples of $\pi$ are separated by $2\pi$. Therefore an
interval of length $2\pi q$ contains at most $\lceil q\rceil$ such
values. Hence the curve crosses the negative real axis at most
$\lceil q\rceil$ times. The absolute value of the signed crossing number
cannot exceed the total number of crossings, and thus
\begin{equation*}
\left|\operatorname{Ind}_{\Gamma_N}(c)\right|
=\left|\operatorname{Ind}_{u}(0)\right|
\leq\lceil q\rceil.
\end{equation*}
Tangential contacts, if present, have zero signed contribution and do not
affect the estimate.
\end{proof}

\begin{thm}[Absence of stability for $N\geq3$]\label{thm:no-stability}
Let $N\geq3$ and $N-1<\alpha\leq N$. Then the stability region of system \eqref{Model} is empty; that is, the zero solution is not asymptotically stable for any $c\in\mathbb{C}$.
\end{thm}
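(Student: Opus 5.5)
We combine Theorem \ref{thm:general-stability} with Lemma \ref{lem:winding-bound}. By Theorem \ref{thm:general-stability}, for $c\notin\Gamma_N([0,2\pi])$ asymptotic stability is equivalent to $\operatorname{Ind}_{\Gamma_N}(c)=N$. Points $c$ on the curve need separate treatment. For such $c$ the characteristic function $D_c$ has a zero on $|z|=1$. By the characteristic-root criterion quoted in the proof of Theorem \ref{thm:mixed-stability}, asymptotic stability then fails, so these $c$ can be discarded at once.

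For $c$ off the curve, we set $q=N-\alpha/2$ and use $N-1<\alpha\leq N$ to get
\begin{equation*}
\frac{N}{2}\leq q<\frac{N+1}{2}.
\end{equation*}
Hence $\lceil q\rceil\leq \lceil (N+1)/2\rceil$, and more precisely $\lceil q\rceil\le (N+2)/2$, since $q<(N+1)/2$ forces $\lceil q\rceil\leq\lfloor (N+1)/2\rfloor+[N\text{ even}]$. It is cleaner to argue directly: $\lceil q\rceil< q+1<(N+3)/2$. For $N\geq3$ we have $(N+3)/2\leq N$, with equality only at $N=3$. Because $\lceil q\rceil$ is an integer strictly less than $(N+3)/2$, in either case we get $\lceil q\rceil<N$ for $N\geq 4$. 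At $N=3$ we use $\lceil q\rceil<3$, which follows from $q<2$, so $\lceil q\rceil\le 2<3$. In general, $q<(N+1)/2\leq N-1$ for $N\geq3$, so $\lceil q\rceil\leq N-1$. Lemma \ref{lem:winding-bound} then gives $|\operatorname{Ind}_{\Gamma_N}(c)|\leq N-1<N$. Therefore the condition \eqref{general-index} can never hold, and no $c$ off the curve is stable.

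The only delicate point is the elementary inequality $q<(N+1)/2\le N-1$ together with integrality, which gives $\lceil q\rceil\le N-1$. This requires $N\geq 3$; at $N=2$ it fails, consistent with the two-term case admitting a region. The boundary case $\alpha=N$ gives $q=N/2$ and is covered by the same bound. The main thing to check is that boundary points $c\in\Gamma_N([0,2\pi])$ are indeed excluded by the root criterion. We take this from the cited characteristic-root criterion, as in the proof of Theorem \ref{thm:mixed-stability}, where a zero on $|z|=1$ prevents asymptotic decay. Combining the two cases, the stability region is empty.
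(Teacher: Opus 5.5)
Your proof is correct and matches the paper's argument: $q=N-\alpha/2<(N+1)/2\le N-1$ gives $\lceil q\rceil\le N-1$, so Lemma \ref{lem:winding-bound} rules out $\operatorname{Ind}_{\Gamma_N}(c)=N$ off the curve, and points on $\Gamma_N$ are excluded because of the unit-circle characteristic zero. The intermediate ceiling estimates (the $(N+2)/2$ and $(N+3)/2$ bounds and the separate $N=3$ case) are correct but redundant, since your final line alone already suffices.
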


\begin{proof}
Because $N-1<\alpha\leq N$,
\begin{equation*}
q=N-\frac{\alpha}{2}<\frac{N+1}{2}.
\end{equation*}
For every integer $N\geq3$, this implies $\lceil q\rceil\leq N-1$. Lemma \ref{lem:winding-bound} therefore gives
\begin{equation*}
\operatorname{Ind}_{\Gamma_N}(c)\leq N-1
\end{equation*}
for every $c$ outside the boundary. The necessary and sufficient condition \eqref{general-index} can never be satisfied. Parameters on the boundary have a characteristic zero with $|z|=1$ and are not asymptotically stable either. Hence no $c\in\mathbb{C}$ yields asymptotic stability.
\end{proof}
For $N=1$, the IVP becomes
\begin{equation}
	\Delta^{\alpha}x(t) = (c - 1)x(t + \alpha - 1), \label{Model1}
\end{equation}
\[ x(0) = x_0. \]
Here, $0 < \alpha \leq 1$. Figure \ref{N1} shows stability regions in the complex plane for several values of $\alpha$. The stability region grows as $\alpha$ increases.
\par For $c \in \mathbb{R}$, substituting $N = 1$ and $z = \pm 1$ into \eqref{char.eqn} gives the stability interval $(1 - 2^\alpha,1)$ on the real axis.
\begin{figure}[H]
	\centering
	\includegraphics[height=4.1in,width=4.1in,keepaspectratio]{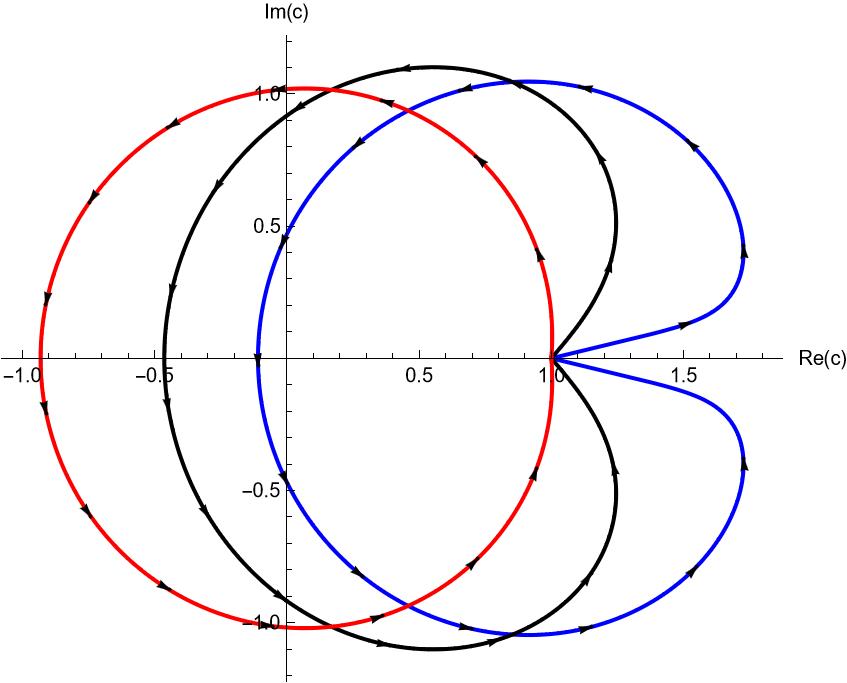}
	\caption{Stability boundaries of system \eqref{Model1} for $\alpha = 0.15$ (blue), $\alpha = 0.55$ (black), and $\alpha = 0.95$ (red). For each oriented curve, its stable component is characterized by $\operatorname{Ind}_{\Gamma_1}(c)=1$; the unbounded exterior has index zero.}
	\label{N1}
\end{figure}

\begin{ex}
	Let $\alpha = 0.55$ and $x(0) = 0.4$. Numerical evaluation of the winding number gives $\operatorname{Ind}_{\Gamma_1}(0.982 + 0.4906\mathrm{i})=1=N$, so the solution of system \eqref{Model1} converges to zero (Fig. \ref{N1ex1a}). In contrast, $\operatorname{Ind}_{\Gamma_1}(0.1346 - 1.101\mathrm{i})=0$, and the corresponding solution diverges from zero (Fig. \ref{N1ex1b}).
\end{ex}
\begin{figure}[H]
	 \centering
	\subfigure[$c=0.982+0.4906\mathrm{i}$, $\operatorname{Ind}_{\Gamma_1}=1$.]{
		\includegraphics[height=2.5in,width=2.5in,keepaspectratio]{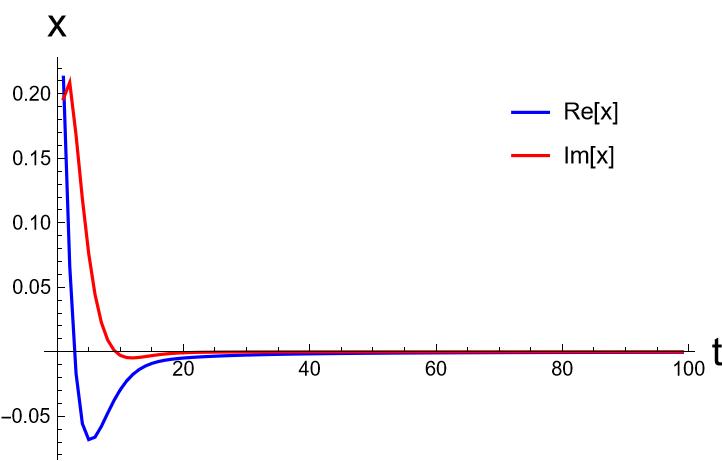}
		\label{N1ex1a}
		} \hspace{0.3cm}
	\subfigure[$c=0.1346-1.101\mathrm{i}$, $\operatorname{Ind}_{\Gamma_1}=0$.]{
		\includegraphics[height=2.5in,width=2.5in,keepaspectratio]{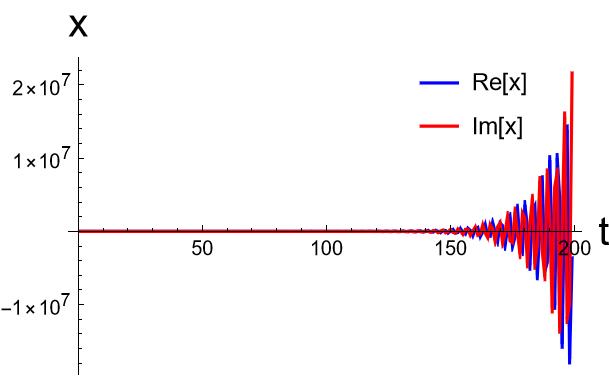}		\label{N1ex1b}}
	\caption{Solutions of system \eqref{Model1} for different values of $c$.}
	\label{N1ex1}
\end{figure}
For $N=2$, the IVP becomes
\begin{equation}
	\Delta^{\alpha}x(t) = (c - 1)x(t + \alpha - 2), \label{Model2}
\end{equation}
\[ x(0) = x_0, \qquad x(1) = x_1. \]
Here, $1 < \alpha \leq 2$. For each $\alpha \in (1,2)$, the curve $\Gamma_2(\theta)$ is bounded in the complex plane and contains a positively oriented closed loop near $c = 1$ (Fig. \ref{N2a}). The component enclosed by this loop has winding number two and is therefore stable. In the displayed cases, this component shrinks as $\alpha$ approaches $2$. Figure \ref{N2abc} shows the corresponding stability regions.

\begin{figure}[H]
	\centering
	\subfigure[$\alpha=1.1$.]{
		\includegraphics[height=2.7in,width=2.7in,keepaspectratio]{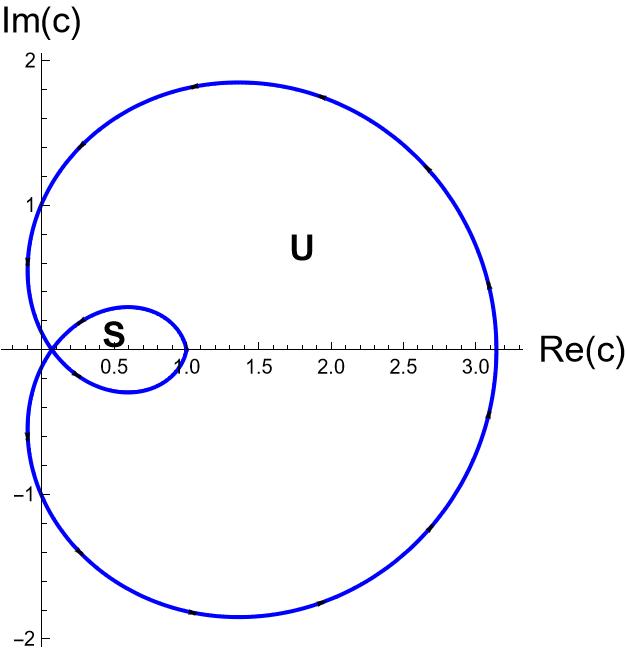}
		\label{N2a}
	} \hspace{0.3cm}
	\subfigure[$\alpha=1.5$.]{
		\includegraphics[height=2.7in,width=2.7in,keepaspectratio]{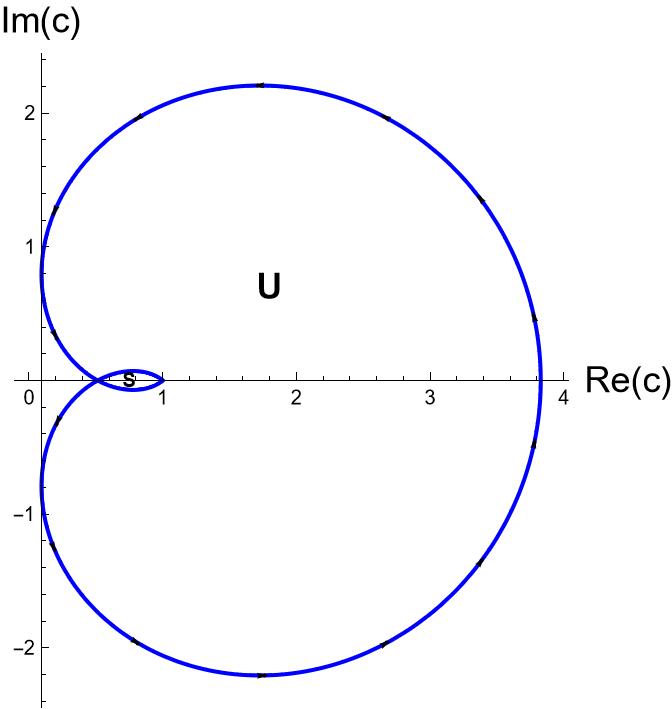}	\label{N2b}
	} \hspace{0.3cm}
	\subfigure[$\alpha=1.8$.]{
	\includegraphics[height=2.7in,width=2.7in,keepaspectratio]{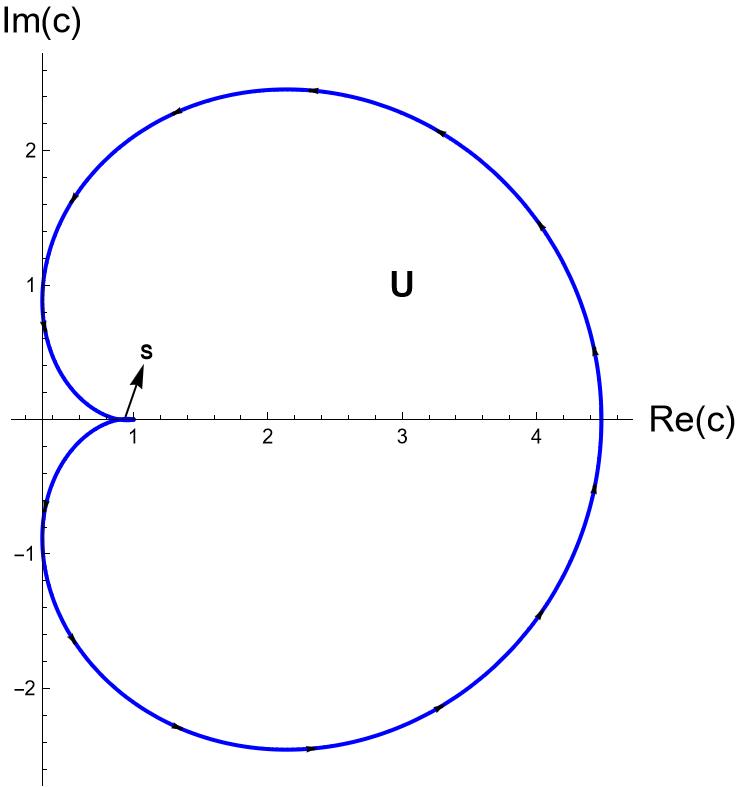}		\label{N2c}
	}
	\caption{Stability regions of system \eqref{Model2} for different values of $\alpha$. In every panel, the component marked S has $\operatorname{Ind}_{\Gamma_2}(c)=2$, whereas the component marked U has an index different from two.}
	\label{N2abc}
\end{figure}
\begin{ex}
	Let $\alpha = 1.1$, $x(0) = 0.1$, and $x(1) = 0.2$. We obtain $\operatorname{Ind}_{\Gamma_2}(0.2415 - 0.06215\mathrm{i})=2=N$, and the solution of system \eqref{Model2} converges to zero (Fig. \ref{N2ex1a}). For $c = 0.1346 - 0.8733\mathrm{i}$, the winding number is one rather than two, and the solution is unbounded (Fig. \ref{N2ex1b}).
\end{ex}
\begin{figure}[H]
	\centering
	\subfigure[$c=0.2415-0.06215\mathrm{i}$, $\operatorname{Ind}_{\Gamma_2}=2$.]{
		\includegraphics[height=2.5in,width=2.5in,keepaspectratio]{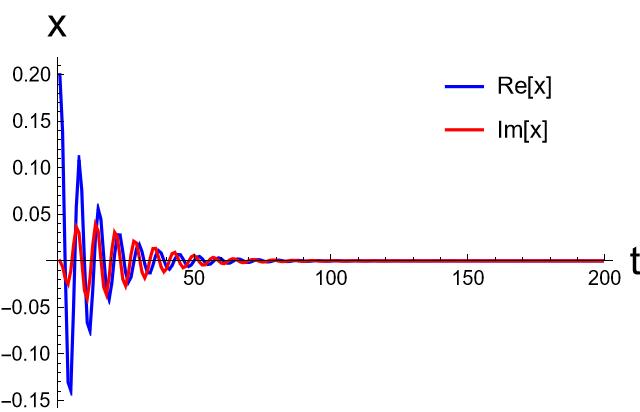}
		\label{N2ex1a}
	} \hspace{0.3cm}
	\subfigure[$c=0.1346-0.8733\mathrm{i}$, $\operatorname{Ind}_{\Gamma_2}=1$.]{
		\includegraphics[height=2.5in,width=2.5in,keepaspectratio]{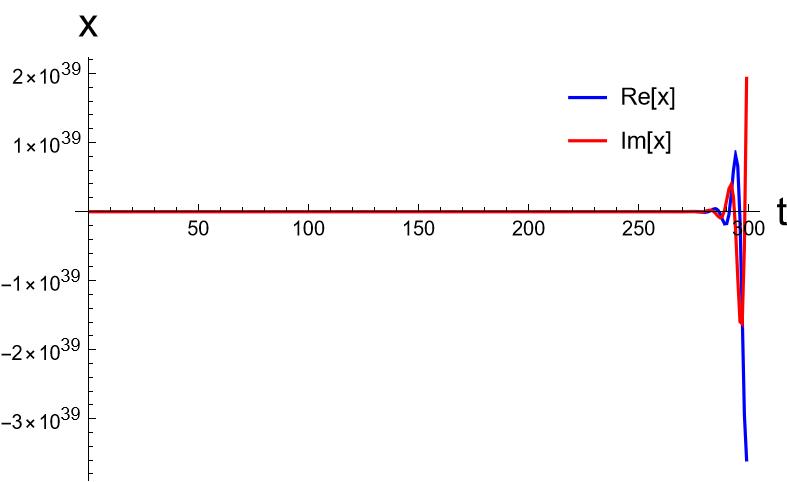}		\label{N2ex1b}}
	\caption{Solutions of system \eqref{Model2} for different values of $c$.}
	\label{N2ex1}
\end{figure}
Theorem \ref{thm:no-stability} proves that the stability region is empty for every $N \geq 3$. The following numerical experiments illustrate this result.
For $N=3$, the IVP becomes 
\begin{equation}
	\Delta^{\alpha}x(t) = (c - 1)x(t + \alpha - 3), \label{Model3}
\end{equation}
\[ x(0) = x_0, \qquad x(1) = x_1, \qquad x(2) = x_2, \]
where $2 < \alpha < 3$.
\begin{ex}
	Let $\alpha = 2.5$, $x(0) = 0.01$, $x(1) = 0.02$, and $x(2) = 0.03$. For $c = 0.3839 + 4.832\mathrm{i}$, $c = 0.6667 - 0.6024\mathrm{i}$, $c = -4.02 - 3.168\mathrm{i}$, and $c = -2.168 + 1.798\mathrm{i}$, the winding numbers are $0$, $1$, $0$, and $1$, respectively. None equals the required value $N=3$, and all four solutions of system \eqref{Model3} are unbounded (Fig. \ref{N3ex}).
\end{ex}
\begin{figure}[H]
	\centering
	\subfigure[$c = 0.3839 + 4.832\mathrm{i}$.]{
		\includegraphics[height=2.5in,width=2.5in,keepaspectratio]{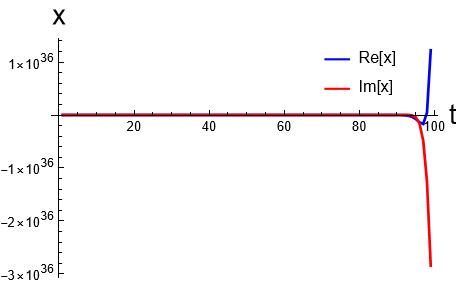}
		\label{N3ex1a}
	} \hspace{0.3cm}\subfigure[$c = 0.6667 - 0.6024\mathrm{i}$.]{
	\includegraphics[height=2.5in,width=2.5in,keepaspectratio]{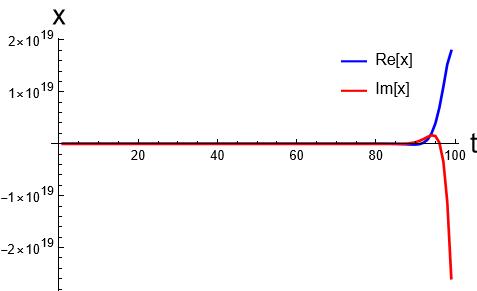}
	\label{N3ex1b}
	} \hspace{0.3cm}\subfigure[$c = -4.02 - 3.168\mathrm{i}$.]{
	\includegraphics[height=2.5in,width=2.5in,keepaspectratio]{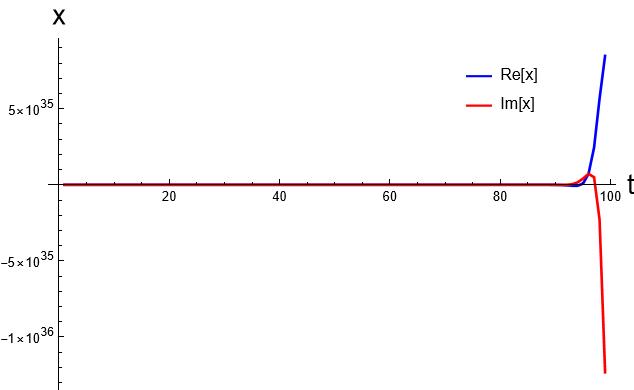}
	\label{N3ex1c}
	} \hspace{0.3cm}
	\subfigure[$c = -2.168 + 1.798\mathrm{i}$.]{
		\includegraphics[height=2.5in,width=2.5in,keepaspectratio]{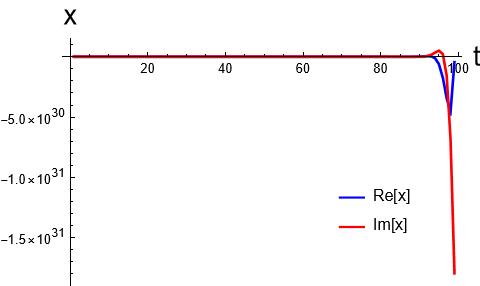}		\label{N3ex1d}}
	\caption{Solutions of system \eqref{Model3} for different values of $c$.}
	\label{N3ex}
\end{figure}
 
 For $N = 6$, the IVP becomes
 \begin{equation}
	\Delta^{\alpha}x(t) = (c - 1)x(t + \alpha - 6), \label{Model6}
 \end{equation}
 \[ x(0) = x_0, \quad x(1) = x_1, \quad x(2) = x_2, \quad x(3) = x_3, \quad x(4) = x_4, \quad x(5) = x_5, \]
 where $5 < \alpha < 6$.
\begin{ex}
	Let $\alpha = 5.3$, with initial conditions $x(0) = 0.1$, $x(1) = 0.2$, $x(2) = 0.3$, $x(3) = 0.4$, $x(4) = 0.5$, and $x(5) = 0.6$. For $c = 1.948 + 0.06482\mathrm{i}$, $c = -4.692 + 2.35\mathrm{i}$, $c = -7.879 - 23.47\mathrm{i}$, and $c = 1.517 - 0.3911\mathrm{i}$, the winding numbers are $3$, $2$, $1$, and $3$, respectively. Since none equals $N=6$, the solutions of system \eqref{Model6} are unbounded (Fig. \ref{N6ex}).
\end{ex}
 \begin{figure}[H]
 	\centering
	\subfigure[$c = 1.948 + 0.06482\mathrm{i}$.]{
 		\includegraphics[height=2.5in,width=2.5in,keepaspectratio]{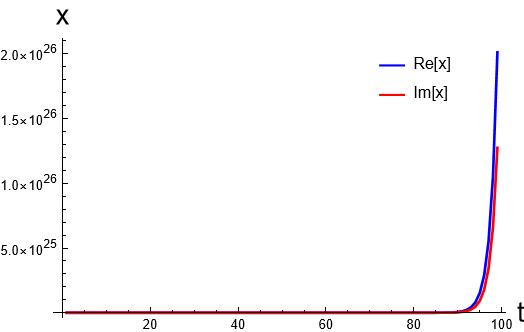}
 		\label{N6ex1a}
	} \hspace{0.3cm}\subfigure[$c = -4.692 + 2.35\mathrm{i}$.]{
 		\includegraphics[height=2.5in,width=2.5in,keepaspectratio]{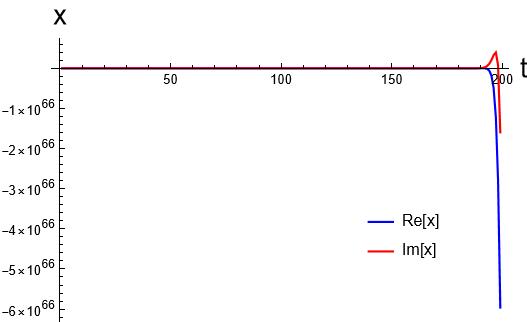}
 		\label{N6ex1b}
	} \hspace{0.3cm}\subfigure[$c = -7.879 - 23.47\mathrm{i}$.]{
 		\includegraphics[height=2.5in,width=2.5in,keepaspectratio]{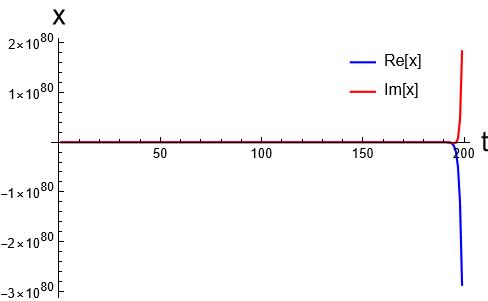}
 		\label{N6ex1c}
 	} \hspace{0.3cm}
	\subfigure[$c = 1.517 - 0.3911\mathrm{i}$.]{
 		\includegraphics[height=2.5in,width=2.5in,keepaspectratio]{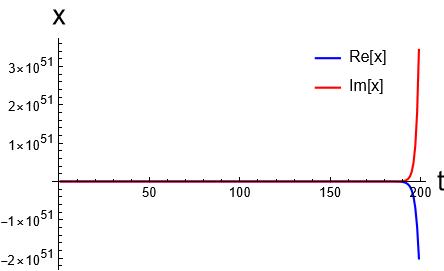}		\label{N6ex1d}}
	\caption{Solutions of system \eqref{Model6} for different values of $c$.}
 	\label{N6ex}
 \end{figure}
\section{Conclusion} \label{con.}
We studied the two-term fractional difference equation $\Delta^{\alpha}x(t) + a\,\Delta^{\beta}x(t + \alpha - \beta - 1) = (b - 1)x(t + \alpha - 2)$ for $0 < \beta \leq 1 < \alpha \leq 2$. The Z-transform yields a characteristic boundary whose winding number gives a necessary and sufficient asymptotic-stability condition. We proved that $a_1=2^{\alpha-\beta}$ is the unique value for which $\gamma(0)=\gamma(\pi)$ and that $a_2=2^{\alpha-\beta}(4-\alpha)/(2-\beta)$ is the unique value for which the boundary is stationary at $\theta=\pi$. The numerical examples show how the index-two stability component changes at these values. The linear criterion also describes the linearized behavior of the equilibria of the nonlinear logistic model. For the arbitrary-order family $\Delta^{\alpha}x(t) = (c - 1)x(t + \alpha - N)$, the winding-number bound proves that the stability region is empty for every $N\geq3$. The numerical experiments are consistent with these theoretical results.
\section*{Funding}
Ch. Janardhan acknowledges financial support from the University Grants Commission, New Delhi, India, under grant No. F.14-34/2011(CPP-II).

\section*{Declarations}
The authors declare that they have no known competing financial interests or personal relationships that could have appeared to influence the work reported in this paper.

Generative AI tools were used only to improve the language and clarity of this manuscript. The authors carefully reviewed and edited the output and take full responsibility for the final content.


\section*{Data availability statement}
No external data sets were used in this study. All numerical results reported in the article were generated from the equations and parameter values specified in the text.

\bibliography{ref.bib}
\end{document}